
\documentclass{article}

\usepackage{amsmath,amsfonts,amssymb,amsthm}
\usepackage{graphicx}
\usepackage{dsfont}
\usepackage[authoryear]{natbib}
\usepackage{enumerate}
\usepackage{mathrsfs}
\RequirePackage[colorlinks,citecolor=blue,urlcolor=blue]{hyperref}

\DeclareMathAlphabet{\mathpzc}{OT1}{pzc}{m}{it}

\newcommand{\desc}{\alpha} 
\newcommand{\R}{\mathds{R}}
\newcommand{\T}{\bf T}

\newcommand{\I}{\mathcal{I}} 
\newcommand{\continuation}{\mathcal{C}}  
\newcommand{\stopping}{\mathcal{S}}        
\newcommand{\negative}{\mathcal{N}}        
\newcommand{\positive}{\mathcal{P}}
\newcommand{\ngi}{N}
\newcommand{\ci}{C}
\newcommand{\arbitraryinterval}{D}
\newcommand{\pairs}{{\Theta}}

\newcommand{\Ga}{G_{\desc}} 
\newcommand{\Ra}{R_{\desc}} 
\newcommand{\Va}{V}
\newcommand{\Wa}{W_{\desc}}
\newcommand{\phia}{\varphi_\desc}
\newcommand{\psia}{\psi_\desc} 
\newcommand{\wa}{w_\desc}
\newcommand{\igen}{\mathcal{L}}
\newcommand{\hit}[1]{\mathpzc{h}_{#1}}
\newcommand{\ind}[1]{\mathds{1}_{#1}}
\newcommand{\D}{\mathcal{D}}
\newcommand{\CC}{\mathscr{C}} 

\newcommand{\ea}[1]{e^{-\desc{#1}}}
 
\def\P{\operatorname{\mathds{P}}}
\newcommand{\Ex}[2]{\E_{#1}\left(#2\right)}
\newcommand{\E}{\operatorname{\mathds{E}}}


\theoremstyle{plain}
\newtheorem{teo}{Theorem}[section]

\newtheorem{lemma}[teo]{Lemma}
\newtheorem{proposition}[teo]{Proposition}
\theoremstyle{definition}
\newtheorem{remark}[teo]{Remark}
\newtheorem{example}[teo]{Example}
\newtheorem{cond}[teo]{Condition}
\newtheorem{algorithm}{Algorithm}[section]

\begin{document}
\title{An algorithm to solve optimal stopping problems for one-dimensional diffusions}
\author{Fabi\'an Crocce\thanks{Centro de Matem\'atica, Facultad de Ciencias, Universidad de la Rep\'ublica, Uruguay
fcrocce@gmail.com}
\qquad
Ernesto Mordecki\thanks{Centro de Matem\'atica, Facultad de Ciencias, Universidad de la Rep\'ublica, Uruguay
mordecki@cmat.edu.uy}}
\maketitle
\begin{abstract}
Considering a real-valued diffusion, 
a real-valued reward function and a positive discount rate, 
we provide an algorithm to solve the optimal stopping problem consisting in finding the optimal expected discounted reward and the optimal stopping time at which it is attained. 
Our approach is based on Dynkin's characterization of the value function.
The combination of Riesz's representation of $\desc$-excessive functions and the inversion formula gives the 
density of the representing measure, being only necessary to determine its support.
This last task is accomplished through an algorithm. 
The proposed method always arrives to the solution, thus no verification is needed,
giving, in particular, the shape of the stopping region. 
Generalizations to diffusions with atoms in the speed measure and to non smooth payoffs are analyzed.
\end{abstract}
\vskip1mm\par\noindent
{\it Mathematics Subject Classification (2010):} 60G40, 60J60.
\vskip1mm\par\noindent
{\it Keywords:} Optimal stopping; diffusions; excessive representation. 
\section{Introduction}
Given a diffusion $X=\{X_t\colon t\geq 0\}$ taking values in an interval $\I\subset\R$,  
a non-negative continuous reward function $g\colon\I\to \R$, 
and a discount factor $\alpha>0$, 
consider the optimal stopping problem consisting in finding the \emph{value function} $\Va(x)$ and the 
\emph{optimal stopping rule} $\tau^*$, 
such that
\begin{equation}\label{eq:osp}
\Va(x)=\Ex{x}{\ea{\tau^*} g(X_{\tau^*})} = \sup_{\tau\in\T}\Ex{x}{\ea{\tau} g(X_{\tau})}.
\end{equation}
Here $\T$ is the class of stopping times and we consider $g(X_\tau) = 0$ if $\tau=\infty$
(see section \ref{section:main} for definitions).
Optimal stopping for real-valued diffusions  is a well established and rich area of research.
It can be inscribed into the class of \emph{markovian} stopping problems, 
existing many different approaches to solve them.
One of the most popular ones, 
the \emph{free boundary} approach (see \citet{PeskirShiryaev:2006} with the historical comments in pp. 50-52 and the corresponding references),
when applicable, is very effective and consists in two steps: 
the solution of a free boundary differential equation to find a candidate solution;
and the verification (usually through stochastic calculus) that the candidate is the true solution.
The celebrated \emph{smooth fit condition} becomes a key tool in this framework.
A second approach we mention is 
Dynkin's characterization of the value function \citep{Dynkin:1963}.
It was used for instance by \citet{Taylor:1968}, and has 
a variation proposed in \citet{DayanikKaratzas:2003} (where concavity is used instead of excesiveness).
Other several different approaches can be found in Chapter IV of \citet{PeskirShiryaev:2006}.


Under the conditions assumed in this paper (see section \ref{section:proof}),
the optimal stopping rule exists and has the form
\begin{equation}\label{eq:osr}
\tau^*=\inf\{t\geq 0\colon X_t\in \stopping\},
\end{equation}
where the \emph{stopping region}  is the closed set
\begin{equation}\label{eq:stopping}
\stopping=\{x\in \I\colon V(x)=g(x)\}.
\end{equation}
The \emph{continuation region} is
$\continuation=\I\setminus\stopping$.
A key role in our proposal is played by the \emph{negative set}
$$
\negative=\{x\in\I\colon(\desc-\igen)g(x)<0\}.
$$
where $\igen$ is the infinitesimal generator of $X$.


Regarding applications, it must be noticed that in most of the problems where a solution can be found, 
the continuation region  $\continuation$ is either a half-line, giving one-sided solutions,
or a finite interval, giving rise to a  two-threshold policy (or a two-sided solution). 
The first situation appears typically in perpetual American options (see for instance \citet{mckean} and \citet{Merton:1973}), 
also in the problem of disruption for a Wiener process (see section 4.4  in \citet{Shiryaev:2008}). 
The second one appears in the case of sequential testing of two simple hypotheses of the mean of a Wiener process 
(see section 4.2  in \citet{Shiryaev:2008}),
in the quickest detection problem \citep{Shiryaev:2010}, 
and also in the pricing of a perpetual  straddle or strangle option \citep{GerberShiu}.
The analysis of continuation intervals appears in \cite{Alvarez:2001} under restrictions
on the shape of the continuation region.
More recent references on one-sided and two-sided solutions are
for instance \cite{RuschendorfUrusov:2008} and \cite{Lempa:2010}.
\cite{LambertonZervos:2013} obtain verification results in a framework of weak solutions
of SDE with measurable coefficients and a state dependent discount. 


As we mentioned above, the continuation region of solvable problems are usually half lines or intervals.
More important, the shape of this set should be known in advance in order to solve the problem.
Solved cases with different continuation regions are seldom treated in the literature,
as in order to apply the smooth pasting condition, one has to guess first the structure of this set.
Furthermore, and perhaps more relevant to our discussion, 
examples are usually solved based on verification results 
(see the discussion in the Introduction in \cite{LambertonZervos:2013} with the references therein). 

The method that we propose in the present paper consists in the following steps:
\begin{enumerate}
\item[(1)] 
Apply Dynkin's characterization to obtain that the value function is the minimal excessive function that is a majorant of the reward.
\item[(2)] 
Represent this excessive function as an integral of the Green kernel of the process w.r.t. a representing measure.
\item[(3)] 
Identify the support of this representing measure as the stopping region  $\stopping$, based on the fact that the value function is
harmonic on the continuation region.
\item[(4)] 
Identify the density (w.r.t. the speed measure) of the representing measure through the inversion formula.
\item[(5)] 
Determine the support $\stopping$ of this measure through an algorithm, 
constructing $\continuation=\I\setminus\stopping$ as an enlargement of the set $\negative$.
\end{enumerate}
The proposed method departs from the scale function and speed measure (that determine the generator $\igen$) 
and the increasing and decreasing
solutions of the equation $\alpha u=\igen u$ (that determine the Green kernel),
and gives the complete solution of the problem without need of further verification.
The main restriction of the method is that the negative set should be a finite union of 
intervals, 
i.e. $\negative =\bigcup_{i=1}^n \ngi_i$.
%
It is important to note that $\negative$ is directly computed from the data of the problem.
The steps of the algorithm to construct the set $\continuation$ are the following:
\begin{enumerate}
\item (enlargment) for each $\ngi_i$ construct the largest possible interval ${\ci_i}\supset \ngi_i$ contained in the continuation region (see Condition \ref{cond:continuation});
\item if ${\ci_i}$ are pairwise disjoint intervals then $\continuation=\bigcup_i {\ci_i}$;
\item else (merge), denote by $\ngi_i$ each connected component of  $\bigcup_i {\ci_i}$ and return to step 1.
(Observe that the number of intervals strictly decreases.)
\end{enumerate}

As a consequence, 
the algorithm's output are the connected components of the continuation region 
(whose number is smaller or equal than $n$) 
determining if the problem is one-sided, two-sided, 
or other (i.e. the continuation region  is the union of several intervals).
The value function is then written as an integral of the Green kernel w.r.t. the just obtained measure, 
and this integral gives the classical form of value function as a linear combination of the fundamental solutions in each continuation interval.


The representation of excessive functions in optimal stopping of diffusions was initiated by \cite{Salminen:1985}, 
who represents the value function in terms of the Martin kernel. 
Afterwards, \cite{MordeckiSalminen:2007} use the Green kernel for optimal stopping of Hunt and L\'evy processes. 
The identification of the representing measure through the inversion formula 
was obtained in \cite{Crocce:2014},
and appears in \cite{CrocceMordecki:2012} for one-sided problems, 
and in \cite{ChristensenEtAl:2019} for multidimensional diffusions. 
It can be traced back to formula (8.30) in \cite{Dynkin:1969}, for the cases when
the limit therein can be interchanged with the integral.
More recently, 
also based in representation methods,
disconnected stopping regions where obtained for optimal stopping problems
for diffusions with discontinuous coefficients in \cite{MordeckiSalminen:2019a,MordeckiSalminen:2019b}.


The rest of the paper is as follows. 
In section \ref{section:main} we introduce the necessary definitions and the main result, 
in section \ref{section:generalizations} we discuss possible generalizations in two separate directions: 
diffusions with atoms in the speed measure, and non-smooth (but still continuous) rewards.
Section \ref{section:examples} presents the implementation of the algorithm and contains two examples, and section \ref{section:proof}
contains the proof of the main result.
\section{Main result}\label{section:main}
Consider a conservative and regular one-di\-men\-sion\-al diffusion $X=\{X_t\colon t\geq 0\}$, 
in the sense of \cite{ItoMcKean:1974} (see also \cite{BorodinSalminen:2002}). 
The state space of $X$ is denoted by $\I$, 
an interval of the real line $\R$  with left endpoint $\ell=\inf\I$ and right endpoint $r=\sup\I$,
where $-\infty\leq\ell<r\leq\infty$. The boundaries can be of any kind but killing. 
Denote by $\P_x$ the probability measure associated with $X$ when starting from $x$, 
and by $\E_x$ the corresponding mathematical expectation. 
The set of stopping times $\T$ is considered with respect to the usual augmentation of the natural 
filtration generated by $X$ (see I.14 in \cite{BorodinSalminen:2002}).


Denote by $\igen$ the \emph{infinitesimal generator} of the diffusion $X$, and by $\D(\igen)$ its domain.
For any stopping time $\tau$ and for any $f\in \D(\igen)$ 
 the following  discounted version of the Dynkin's formula holds:
\begin{equation}
\label{eq:dynkinFormula}
 f(x)=\E_x \left(\int_0^{\tau} \ea{t} (\desc-\igen)f(X_t) dt\right)+ \E_x(\ea{\tau}f(X_\tau)).
\end{equation}
The \emph{resolvent} of the process $X$ is the operator $\Ra$ defined by
\begin{equation*}
\Ra u(x)=\int_0^\infty e^{-\alpha t}\E_x u(X_t)dt,
\end{equation*}
applied to a function $u\in \CC_b(\I)=\{u\colon\I\to \R, u \mbox{ is continuous and bounded}\}$. The image of the operator $\Ra$ is independent of  $\alpha>0$ and coincides with the domain of the infinitesimal generator $\D(\igen)$. 
Moreover, for any $f\in \D(\igen)$, $\Ra (\desc-\igen)f = f$, and for any $u\in \CC_b(\I)$, $(\desc-\igen)\Ra u = u$. 
In other terms, $\Ra$ and $\desc-\igen$ are inverse operators (see Prop. VII.1.4 in \cite{RevuzYor:1999}).
Denoting by $s$ and $m$ the scale function and the speed measure of the diffusion $X$ respectively, 
we have that,
for any $f \in \D(\igen)$,
the lateral derivatives with respect to the scale function exist for every $x\in (\ell,r)$.
Furthermore, they satisfy 
\begin{equation}\label{eq:atom} 
\frac{\partial^+ f}{\partial s}(x)- \frac{\partial^- f}{\partial s}(x)=  m(\{x\}) \igen f(x),
\end{equation}
and the following identity holds for $z>y$:
\begin{equation*}
 \frac{\partial^+ f}{\partial s}(z)-\frac{\partial^+ f}{\partial s}(y)=\int_{(y,z]} \igen f(x) m(dx).
\end{equation*}
This last formula allows to compute the infinitesimal generator of $f$ at $x\in(\ell,r)$ 
by the Feller's differential operator \citep{Feller:1957}
\begin{equation}
\label{eq:difop}
\igen f(x)=\frac{\partial}{\partial m} \frac{\partial^+}{\partial s}f(x).
\end{equation}
Given a function $u\colon \I \to \R$, and $x\in (\ell,r)$ we give to $\igen u(x)$ the meaning given in \eqref{eq:difop} if it makes sense. 
We also define $\igen u(\ell)=\lim_{x\to \ell^+}\igen u(x)$, if the limit exists.
There exist two continuous functions $\phia \colon \I \mapsto \R^+$ decreasing,
and $\psia\colon \I\mapsto \R^+$ increasing, solutions of $\alpha u = \igen u$, 
such that any other continuous function $u$ is a solution of the differential equation if and only if 
$u=a\phia+b\psia$, with $a,b$ in $\R$. 
Denoting by $\hit{z}=\inf \{t\colon X_t = z\}$ the hitting time of level $z\in\I$, we have 
\begin{equation}\label{eq:hitting}
\E_x (e^{-\alpha \hit{z}})=
\begin{cases}
	\frac{\psia(x)}{\psia(z)},\quad x\leq z,\\[.5em]
	\frac{\phia(x)}{\phia(z)},\quad x\geq z.
\end{cases}
\end{equation}
The functions  $\phia$ and $\psia$, though not necessarily in $\D(\igen)$, also satisfy \eqref{eq:atom}
for all $x\in (\ell,r)$,
so that in case $m(\{x\})=0$, 
the derivative at $x$ of both functions with respect to the scale  exists. 
The $\alpha$-\emph{Green function} of $X$ is defined by
\begin{equation*}
\Ga(x,y)=\int_0^\infty e^{-\alpha t}p(t;x,y)dt,
\end{equation*}
where $p(t;x,y)$ is the transition density of the diffusion with respect to the speed measure $m(dx)$
(this density always exists, see \cite{BorodinSalminen:2002}). 
The Green function may be expressed in terms of $\phia$ and $\psia$ as follows:
\begin{equation}
\label{eq:Garepr}
G_\alpha(x,y)=
\begin{cases}
w_\alpha ^{-1} \psia(x) \phia (y),\quad  & x\leq y, \\
w_\alpha ^{-1} \psia(y) \phia (x),  & x\geq y,
\end{cases}
\end{equation}
where $w_\alpha$, the \emph{Wronskian}, given by
\begin{equation*}
 w_\alpha=\frac{\partial \psia^+}{\partial s}(x)\phia(x)-\psia(x)\frac{\partial \phia^+}{\partial s}(x),
\end{equation*}
is positive and independent of $x$ (\cite{BorodinSalminen:2002}). 
For general reference on diffusions and Markov processes see 
\cite{BorodinSalminen:2002,ItoMcKean:1974,RevuzYor:1999,Dynkin:1965,KaratzasShreve:1991}.


A  non-negative Borel function $u\colon\I\to \R$ is called \emph{$\alpha$-excessive} for the process $X$ if
$e^{-\alpha t}\E_x(u(X_t))\leq u(x)$ for all $x\in \I$ and $t\geq 0$, and
 $\lim_{t \to 0} \E_x(u(X_t))= u(x)$ for all $x\in \I$.
A 0-excessive function is said to be \emph{excessive}. 
%
%
Dynkin's characterization \citep{Dynkin:1963} states that, if the reward function is lower semi-continuous,  $V$ is the value function of the non-discounted optimal stopping problem with reward $g$ if and only if $V$ is the least excessive function such that $V(x)\geq g(x)$ for all $x\in \I$. 
Applying this result to a killed process \citep{CrocceMordecki:2012}, we obtain that $\Va$, 
the value function of the problem \eqref{eq:osp}, 
is characterized as the least $\alpha$-excessive majorant of $g$ .


A key feature of our proposal is the representation of excessive functions as integrals of the Green kernel.
The Riesz's representation of an $\alpha$-excessive function states that a function $u\colon \I \to \R$ is $\alpha$-excessive if and only if there exist a non-negative Radon measure $\mu$ 
on $[\ell,r]$ such that
\begin{equation}
\label{eq:alphaexcessive}
 u(x)=\int_{(\ell,r)}\Ga(x,y)\mu(dy) + \mu(\{\ell\})\varphi_\alpha(x)+\mu(\{r\})\psi_\alpha(x).
\end{equation}
Furthermore, the previous representation is unique. 
The measure $\mu$ is called the representing measure of $u$.
Formula \eqref{eq:alphaexcessive} is obtained from II.29 in \cite{BorodinSalminen:2002}.


We next formulate our main result in a smooth framework: the value function $g$ 
satisfies the inversion formula (in particular $\igen g(x)$ must be defined for all $x\in\I$):
\begin{equation} \label{eq:inversionformula}
g(x)=\int_{\I} \Ga(x,y)(\desc-\igen)g(y)m(dy);
\end{equation}
and the speed measure has no atoms. 
The proof of this result is deferred to section \ref{section:proof}.
A discussion of possible generalizations in presented in section \ref{section:generalizations}.
Denote
\begin{equation}\label{eq:sigma}
\sigma(dy)=(\desc-\igen)g(y)m(dy).
\end{equation}
\begin{teo} \label{theorem:main}
Consider a diffusion $X$ whose speed measure has no atoms. 
Assume that the reward function $g$ satisfies the inversion formula \eqref{eq:inversionformula}
and that the negative set is a finite union of $n\geq 1$ 
disjoint intervals, i.e.
$$
\negative=\cup_{i=1}^n\ngi_i.
$$
Then, the value function of the OSP is
 \begin{equation}
 \label{eq:VaDiffGeneral}
  \Va(x)=\int_{\stopping} \Ga(x,y) \sigma(dy),
 \end{equation}
 where the continuation region $\continuation=\I\setminus \stopping$ is a finite union of $1\leq m\leq n$ disjoint 
 intervals
 $\ci_i$,
 i.e.
$$
\continuation=\cup_{i=1}^m\ci_i,
$$
s.t. $\negative\subset\continuation$, and 
\begin{enumerate}[\rm(a)]
\item\label{a} if $\ell<\inf C$, then $\int_{\ci_i}\phia(y)\sigma(dy)=0$,
\item\label{b} if $\sup C<r$, then $\int_{\ci_i}\psia(y)\sigma(dy)=0$,
\item \label{c} for $x\in \ci_i$, $\int_{\ci_i}\Ga(x,y)\sigma(dy)\leq 0.$
\end{enumerate}
Furthermore, the continuation region $\continuation$ can be found by Algorithm \ref{algor}, to be presented further on.
\end{teo}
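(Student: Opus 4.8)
\emph{Proof idea.} The plan is to work first with the \emph{true} continuation region $\continuation^{\ast}:=\{x\in\I:\Va(x)>g(x)\}$, establishing the representation \eqref{eq:VaDiffGeneral} with $\stopping=\I\setminus\continuation^{\ast}$, the fact that $\continuation^{\ast}$ is a union of at most $n$ disjoint intervals containing $\negative$, and the conditions \ref{a}--\ref{c}; and only afterwards that these conditions, together with the maximality built into the enlargement step, pin down $\continuation^{\ast}$, so Algorithm~\ref{algor} returns it. By Dynkin's characterization applied to the $\desc$-killed process \citep{CrocceMordecki:2012}, $\Va$ is the least $\desc$-excessive majorant of $g$, so by Riesz's representation \eqref{eq:alphaexcessive} it has a unique non-negative representing measure $\mu$. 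On the open set $\continuation^{\ast}$ the optimal rule does not stop at once, so $\ea{t}\Va(X_{t})$ is a martingale up to the first exit time of $\continuation^{\ast}$, that is $(\desc-\igen)\Va=0$ there; since applying $\desc-\igen$ to \eqref{eq:alphaexcessive} returns the $m$-density of $\mu$ (the inversion formula, cf.\ \eqref{eq:difop}), $\mu$ charges no subset of $\continuation^{\ast}$, and it has no atom on $\partial\stopping$ either, for such an atom would make $\Va$ have a downward kink in the scale at that point (as $\Ga(\cdot,y)$ does at $y$, $m$ being atomless), incompatible with $\Va\geq g$ touching $g$ from above there. On $\operatorname{int}\stopping$, where $\Va=g$, one gets $(\desc-\igen)\Va=(\desc-\igen)g$, so $\mu$ and $\sigma$ of \eqref{eq:sigma} agree there; in particular $(\desc-\igen)g\geq 0$ on $\operatorname{int}\stopping$, whence $\negative\subset\continuation^{\ast}$, and ---the contribution of the endpoints $\ell,r$ being controlled by the non-killing boundary hypothesis--- $\mu(\cdot)=\sigma(\cdot\cap\stopping)$, which is \eqref{eq:VaDiffGeneral}.

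For the shape of $\continuation^{\ast}$: on a connected component $\arbitraryinterval$ of $\continuation^{\ast}$ disjoint from $\negative$ one would have $(\desc-\igen)(\Va-g)=-(\desc-\igen)g\leq 0$, so $\Va-g$ would be $\desc$-subharmonic on $\arbitraryinterval$, strictly positive inside and vanishing at each finite endpoint (the infinite-endpoint case being handled by the boundary classification, through $\Va-g\le 0$ in the limit), contradicting the maximum principle; hence every component of $\continuation^{\ast}$ meets $\negative$, and disjointness together with the fact that $\negative$ has $n$ connected components force $\continuation^{\ast}=\bigcup_{i=1}^{m}\ci_{i}$ with $1\leq m\leq n$. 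Put $A_{k}=\int_{\ci_{k}}\phia(y)\sigma(dy)$ and $B_{k}=\int_{\ci_{k}}\psia(y)\sigma(dy)$. Subtracting \eqref{eq:VaDiffGeneral} from the inversion formula \eqref{eq:inversionformula} and using \eqref{eq:sigma} gives $g(x)-\Va(x)=\sum_{k}\int_{\ci_{k}}\Ga(x,y)\sigma(dy)\leq 0$, with equality for $x\in\stopping$. Letting $x$ range over an interval of $\stopping$ separating $\ci_{i-1}$ from $\ci_{i}$ (or lying to the left of $\ci_{1}$, or to the right of $\ci_{m}$) and factoring $\Ga$ through \eqref{eq:Garepr}, the identity $w_{\alpha}^{-1}\bigl(\psia(x)\sum_{k\geq i}A_{k}+\phia(x)\sum_{k<i}B_{k}\bigr)=0$ holds on that interval, and the linear independence of $\phia$ and $\psia$ forces $\sum_{k\geq i}A_{k}=0$ and $\sum_{k<i}B_{k}=0$; differencing consecutive indices then yields $A_{i}=0$ whenever $\ell<\inf\ci_{i}$ and $B_{i}=0$ whenever $\sup\ci_{i}<r$, which are \ref{a} and \ref{b}. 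Finally, for $x\in\ci_{i}$ every summand with $k\neq i$ equals $w_{\alpha}^{-1}\psia(x)A_{k}$ or $w_{\alpha}^{-1}\phia(x)B_{k}$ with the relevant factor already null, so $\int_{\ci_{i}}\Ga(x,y)\sigma(dy)=g(x)-\Va(x)\leq 0$, which is \ref{c}.

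For the converse and the algorithm, let $\arbitraryinterval=\bigcup_{j}\ci_{j}\supset\negative$ be \emph{any} finite union of disjoint intervals satisfying \ref{a}--\ref{c}, and set $\Wa(x):=\int_{\I\setminus\arbitraryinterval}\Ga(x,y)\sigma(dy)$. Since $\negative\subset\arbitraryinterval$, the measure $\sigma$ is non-negative on $\I\setminus\arbitraryinterval$, so $\Wa$ is $\desc$-excessive by \eqref{eq:alphaexcessive}; running the preceding computation in reverse, \ref{a}--\ref{c} give $\Wa=g$ on $\I\setminus\arbitraryinterval$, $\Wa\geq g$ on $\arbitraryinterval$, $(\desc-\igen)\Wa=0$ on $\arbitraryinterval$, and $\Wa=g$ at the finite endpoints of the $\ci_{j}$. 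Thus $\Wa$ is a $\desc$-excessive majorant of $g$, whence $\Wa\geq\Va$; but on each $\ci_{j}$ the function $\Wa$ is the $\desc$-harmonic function carrying those boundary values, so by \eqref{eq:hitting} and optional stopping $\Wa(x)=\Ex{x}{\ea{\tau}g(X_{\tau})}$ for $x\in\arbitraryinterval$, $\tau$ being the first exit time from $\arbitraryinterval$, and therefore $\Wa\leq\Va$. Hence $\Wa=\Va$ and $\I\setminus\arbitraryinterval\subset\stopping$. Applied to the output of Algorithm~\ref{algor} ---a family of exactly this kind in which each $\ci_{j}$ is the \emph{largest} interval around a component of $\negative$ meeting Condition~\ref{cond:continuation} (equivalent to \ref{a}--\ref{c}), the enlargement-and-merge loop terminating because the number of intervals strictly decreases at each merge--- this gives $\I\setminus\arbitraryinterval\subset\stopping$, and maximality of the $\ci_{j}$ gives the reverse inclusion, so the output coincides with $\continuation$.

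The main obstacle is the pair of analytic facts in the first step ---that $\Va$ is $\desc$-harmonic throughout $\continuation^{\ast}$ and that its representing measure charges neither $\continuation^{\ast}$ nor $\partial\stopping$, i.e.\ the smooth-fit statement together with the correct handling of the boundary points $\ell$ and $r$--- and, in the third step, the identification $\Wa(x)=\Ex{x}{\ea{\tau}g(X_{\tau})}$ on $\arbitraryinterval$, which is what upgrades the necessary conditions \ref{a}--\ref{c} into a genuine characterization of $\continuation$ and thereby validates the algorithm. The remaining steps are bookkeeping with the explicit kernel \eqref{eq:Garepr} and the linear independence of $\phia$ and $\psia$.
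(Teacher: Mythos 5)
Your verification half is essentially sound, and it is in fact the paper's own mechanism: for a finite union of disjoint intervals $\arbitraryinterval\supset\negative$ satisfying (a)--(c), the function $\Wa(x)=\int_{\I\setminus\arbitraryinterval}\Ga(x,y)\sigma(dy)$ is $\desc$-excessive, equals $g$ off $\arbitraryinterval$ and dominates $g$ on it (the decomposition $g=\Wa+\sum_i\int_{\ci_i}\Ga(\cdot,y)\sigma(dy)$ together with Lemma \ref{lem:waeqg}), and attainment at the exit time of $\arbitraryinterval$ via harmonicity (Proposition \ref{lem:Wa}) closes the sandwich. But two essential pieces are missing. First, you never prove that Algorithm \ref{algor} actually produces such a family: every step of the algorithm presupposes the existence of an interval $\ci$ such that $(\ngi,\ci)$ satisfies Condition \ref{cond:continuation}, formulated with the signed measure $\sigma_\ngi$ of \eqref{eq:arbitrary} rather than with $\sigma$, and proving this existence --- for a single negative interval, for intervals reaching $\ell$ or $r$, and after each merge --- is precisely the content of Lemmas \ref{lemma:enlargement}, \ref{lem:caso1}, \ref{lem:caso2}, \ref{lemma:merge}, \ref{lem:casoizq}, \ref{lem:casoder} and \ref{lem:caso2lados}, i.e.\ the bulk of the paper's proof (including termination and the exclusion of the degenerate output $\ci=\I$, Remark \ref{remark:total}). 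Asserting that the output is ``a family of exactly this kind'' assumes the hard part; also, Condition \ref{cond:continuation} is only ``equivalent to (a)--(c)'' for the terminal intervals, where $\sigma_{\ngi_i}=\sigma$ on $\ci_i$ because the final $\ci_i$ are pairwise disjoint and contain all of $\negative$ --- not during the run of the algorithm.

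Second, your necessity half rests on facts you yourself flag as the main obstacle and do not prove: that the Riesz representing measure of $\Va$ charges neither $\continuation^{\ast}$ nor $\partial\stopping$ and coincides with $\sigma$ on $\stopping$. Ruling out atoms of $\mu$ on $\partial\stopping$ point by point does not yield $\mu=\sigma\ind{\stopping}$, since a priori $\partial\stopping$ need not be finite (the finite-component structure of $\stopping$ is part of what is being proved), and the infinite-endpoint cases in your maximum-principle argument are waved at rather than handled under the theorem's hypotheses. The paper sidesteps all of this by never analyzing the true continuation region directly: it constructs the candidate by the algorithm and verifies it, which is why no smooth-fit statement is needed. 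Finally, the closing appeal to ``maximality of the $\ci_j$'' is unsubstantiated: nothing in the enlargement construction shows the output intervals are maximal among those satisfying Condition \ref{cond:continuation}, and the identification of the output with $\continuation$ should come, as in the paper, from $\Wa=\Va$ combined with $\Wa=g$ off the output and the attainment at the hitting time of its complement, not from a maximality property.
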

\begin{remark} \label{remark:formaVgeneral}
If $x\in\ci_i=(\ell_i,r_i)$, according to \eqref{eq:VaDiffGeneral}, we have
\begin{multline*}
\Va(x)=\int_{\I\setminus \continuation}\Ga(x,y)\sigma(dy)\\
=\int_{(\I\setminus \continuation)\cap \{x<\ell_i\}}\wa^{-1}\psia(y)\phia(x)\sigma(dy)+\int_{(\I\setminus \continuation)\cap \{x>r_i\}}\wa^{-1}\psia(x)\phia(y)\sigma(dy)\\
=k_1^i \phia(x)+k_2^i \psia(x).
\end{multline*}
Applying the representation Lemma \ref{lem:waeqg} we know that $\Va(\ell_i)=g(\ell_i)$ and $\Va(r_i)=g(r_i)$, 
obtaining
 \begin{equation}\label{eq:continuousfit1}
 k_1^i=\frac{g(r_i)\psia(\ell_i)-g(\ell_i)\psia(r_i)}{\psia(\ell_i)\phia(r_i)-\psia(r_i)\phia(\ell_i)},
 \end{equation}
 and
  \begin{equation}\label{eq:continuousfit2}
   k_2^i=\frac{g(\ell_i)\phia(r_i)-g(r_i)\phia(\ell_i)}{\psia(\ell_i)\phia(r_i)-\psia(r_i)\phia(\ell_i)}.
   \end{equation}
In the particular case in which $\ell_i=\ell$ we have $k_1^i=0$ and $k_2^i=g(r_i)/\psia(r_i)$, and if $r_i=r$ then $k_1=g(\ell_i)/\phia(\ell_i)$ and $k_2=0$. We have then the classical alternative formula
\begin{equation}
\label{eq:VaMasHumana}
\Va(x)=
\begin{cases}
g(x),&\text{for $x \notin \continuation$,}\\
k_1^i\phia(x)+k_2^i\psia(x), &\text{for $x \in \ci_i\colon i=1\ldots m$.}
\end{cases}
\end{equation}
\end{remark}
The coefficients in \eqref{eq:continuousfit1} and \eqref{eq:continuousfit2} 
appeared (in a slightly different form) in \cite{Alvarez:2001}, and also in \cite{Lempa:2010}, and \cite{LambertonZervos:2013}. 


As we mentioned above, 
the continuation region is constructed as an enlargement of the negative set,
and this is done by enlarging each of the 
intervals $\ngi_i$ of $\negative$. 
Introduce $\positive=\I\setminus\negative$ the positive part of the support of $\sigma$, 
and denote by $\sigma^+(dx)$ the measure 
\begin{equation*} 
\sigma^+(dx):= \sigma(dx)\ind{\positive}(x),
\end{equation*}
where $\sigma$ is given in \eqref{eq:sigma},
and,
for an arbitrary interval $\arbitraryinterval\subset\I$ define the signed measure $\sigma_\arbitraryinterval$ by
\begin{equation}\label{eq:arbitrary}
\sigma_\arbitraryinterval(dx)=\ind{\arbitraryinterval}(x)\sigma(dx)+\sigma^+(dx)\ind{\I\setminus\arbitraryinterval}.
\end{equation}
Observe that $\sigma_\arbitraryinterval$ is a positive measure outside $\arbitraryinterval$, 
equal to $\sigma$ in $\arbitraryinterval$.

The following statement specifies what are the conditions that the enlarged interval $\ci$ should satisfy. 
\begin{cond} \label{cond:continuation}
We say that the pair of intervals $(\ngi,\ci) \colon \ngi\subseteq\ci\subseteq \I$ satisfy 
Condition \ref{cond:continuation} if the following assertions hold:
\begin{enumerate}[(i)]
\item \label{i} both, $\int_\ngi \phia(x) \sigma(dx)\leq 0$ and $\int_\ngi \psia(x) \sigma(dx)\leq 0$;
\item \label{ii} if $\ell<\inf \ci$, then $\int_{\ci}\phia(x)\sigma_\ngi(dx)=0$;
\item \label{iii} if $\sup \ci< r$, then $\int_{\ci}\psia(x)\sigma_\ngi(dx)=0$;
\item \label{iv} for every $x\in \ci$, $\int_{\ci}\Ga(x,y)\sigma_\ngi(dy)\leq 0.$
\end{enumerate}
\end{cond}
The algorithm to construct the continuation region follows.
\begin{algorithm} \label{algor}
(Starting from a subset of the continuation region, in subsequent steps, increase the considered subset until finding the actual continuation region.)

\begin{itemize}
\item[BS.]\emph{(base step)} Consider disjoint 
intervals $\ngi_1,\ldots,\ngi_n \subseteq \I$ such that
$$
\negative=\left\{x\in \I:(\desc-\igen)g(x)<0\right\}=\bigcup_{i=1}^n \ngi_i.
$$
Consider for each $i$,  the interval ${\ci_i}$ such that $(\ngi_i,{\ci_i})$ satisfies Condition \ref{cond:continuation} 
(this can be done in virtue of Lemma \ref{lemma:enlargement}). Define 
$$
\pairs=\left\{(\ngi_i,{\ci_i}):i=1\ldots n\right\},
$$
and go to the iterative step (IS) with $\pairs$.

\item[IS.]\emph{(iterative step)} At this step we assume given a set $\pairs$ of pair of intervals satisfying Condition \ref{cond:continuation}. We assume the notation\footnote{we remark that at different moments the algorithm execute this step, the notation refers to different objects, e.g. the set $\pairs$ is not always the same set.}
\begin{equation*}
\pairs=\{(\ngi_i=(a_i,b_i),{\ci_i}=({\bar{a}_i},{\bar{b}_i}))\colon i=1 \ldots n\},
\end{equation*}
with $a_i < a_j$ if $i<j$ (the intervals $N_i$ are ordered) and $b_i<a_{i+1}$ (the intervals $N_i$ are disjoint)

\begin{itemize}

\item If for some $j$, ${\ci_j}=\I$, the algorithm is finished and the continuation region is $\I$.

\item Else, if the intervals ${\ci_i}$ are pairwise disjoint, the algorithm is finished and the continuation region is 
$$\continuation=\bigcup_{i=1}^n {\ci_i}$$

\item Else, if ${\bar{a}_j}=\ell$ for some $j>1$, add to $\pairs$ the pair $(\ngi=(\ell,b_j),\ci)$ satisfying 
Condition \ref{cond:continuation}, and remove from $\pairs$ the pairs $(\ngi_i,{\ci_i})$ for $i=1\ldots j$. Observe that the existence of $\ci$ is proved in 
Lemma \ref{lem:caso1}. Return to the iterative step (IS).

\item Else, if ${\bar{b}_j}=r$ for some $j<n$, add to $\pairs$ the pair $(\ngi=(a_j,r),\ci)$ satisfying Condition \ref{cond:continuation}, and remove from $\pairs$ the pairs $(\ngi_i,{\ci_i})$ for $i=j\ldots n$ (observe that the existence of $\ci$ is proved in 
Lemma \ref{lem:caso2}). Return to the iterative step (IS).

\item Else, if for some $j$, ${\ci_j}\cap {\ci_{j+1}}\neq \emptyset$, remove from $\pairs$ the pairs $j$ and $j+1$, and add to $\pairs$ the pair $(\ngi=(a_j,b_{j+1}),\ci)$ satisfying Condition \ref{cond:continuation} 
(its existence is guaranteed, depending on the situation  by Lemmas 
\ref{lemma:merge}, 
\ref{lem:casoizq}, 
\ref{lem:casoder} or 
\ref{lem:caso2lados}). 
Return to the iterative step (IS).
\end{itemize}
\end{itemize}
\end{algorithm}
Finally note that, each time when we return to the iterative step the number of pairs of intervals in $\pairs$ decreases, 
the algorithm performs at maximum $n$ steps.

\section{Generalizations}\label{section:generalizations}

\subsection{Diffusions with atoms in the speed measure}

The absence of atoms of the speed measure was required only for simplicity of exposition. 
A modification of the main result can be formulated also when the speed measure has a finite number of atoms.
The main difference is that the functions 
$$
z \mapsto\int_{(z,b)}\phia(x)\sigma_\ngi(dx),\quad
z \mapsto\int_{(a,z)}\psia(x)\sigma_\ngi(dx),
$$
in the proof of Lemma \ref{lemma:enlargement} can be discontinuous, having finite jumps at the atoms.
Then, if one of the extremes of an interval happens to be an atom, 
in order to verify \eqref{ii} and \eqref{iii} in Condition \ref{cond:continuation}, 
the representing measure should contain part of the mass of the atom, 
and the smooth fitting does not hold.
This situation, 
with the presentation of corresponding examples, 
was examined in \cite{CrocceMordecki:2012}.

\subsection{More general reward functions}
In many situations the reward function $g$ is not regular enough to satisfy the inversion formula \eqref{eq:inversionformula}.
Assume then that there exists a measure $\nu$ such that
\begin{equation}
\label{eq:mug}
g(x)=\int_\I \Ga(x,y) \nu(dy),
\end{equation}
where $g$ is non-negative and continuous, 
and $\Ga(x,y)$ is defined by \eqref{eq:Garepr}. 
In these cases, considering the second derivative of the difference of two convex functions as a signed measure, it is possible to obtain a ``generalized'' inversion formula useful for our needs 
(see \cite{Dudley:2002} Problems 11 and 12 of Section 6.3).

Just to consider a simple example, 
assume that $X$ is a standard Brownian motion, 
and  consider the function $g\colon \R \to \R$ given by
\begin{equation*}
g(x):=
\begin{cases}
0,&x\leq 0,\\
x,& 0<x<1,\\
2-x,& 1\leq x \leq 2,\\
x-2, & x>2.
\end{cases}
\end{equation*}
In this case, the differential operator is $\igen f=\frac12f''$ when $f$ is in $\D(\igen)$. The inversion formula \eqref{eq:inversionformula} would be
\begin{equation*}
g(x)=\int_{\R}\Ga(x,y)(\desc-\igen)g(y) m(dy)
\end{equation*}
where $m(dy)=2 dy$, so the candidate to be $\nu$ is $(\desc-\igen)g(y)2 dy$. The derivatives of $g$, in the general sense, would be
\begin{equation*}
g'(x)=
\begin{cases}
1,& x<1\\
-1,& 1 < x < 2 \\
1 & x>2
\end{cases}
\end{equation*}
and the second generalized derivative is the measure 
$-2\delta_1(dx)+2\delta_2(dx)$ (where $\delta_a(dx)$ denotes the Dirac's delta measure at the point $x=a$). 
This lead us to consider
\begin{equation}\label{eq:nu}
\nu(dy)= 2\desc g(y) dy +2 \delta_1(dy)-2\delta_2(dy).
\end{equation}
The corresponding computations show that \eqref{eq:mug} holds with $\nu$ in \eqref{eq:nu}.

\begin{teo} \label{teo:general2}
Consider a one-dimensional diffusion $X$ and a non-negative and continuous reward function 
$g\colon{\I} \to \R$ such that  \eqref{eq:mug} holds, 
with $\nu$ a signed measure on $\I$. 
Suppose that $\ci_i\colon i=1,\ldots, m$ ($m$ could be $\infty$) are pairwise disjoint subintervals of $\I$, 
such that 
\begin{enumerate}[\rm(a)]
\item $\int_{\ci_i}\phia(y)\nu(dy)=0$ if there is some $x\in \I$ such that $x<y$ for all $y\in \ci_i$,
\item $\int_{\ci_i}\psia(y)\nu(dy)=0$ if there is some $x\in \I$ such that $x>y$ for all $y\in \ci_i$.
\end{enumerate}
Define $\stopping$ by 
\begin{equation*}
\stopping=\I\setminus \cup_{i=1}^n \ci_i.
\end{equation*}
and $\Va\colon \I \to \R$ by
\begin{equation*}
\Va(x)=\int_{\stopping} \Ga(x,y)\nu(dy). 
\end{equation*}
If $\nu(dy)\geq 0$ in $\stopping$, and $\Va\geq g$ in $\continuation=\cup_{i=1}^m \ci_i$, then
$\Va$ is the value function associated with the OSP, and $\stopping$ is the stopping region.
\end{teo}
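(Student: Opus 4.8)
The strategy is to appeal to Dynkin's characterization: it suffices to show that the function $\Va$ defined in the statement is the least $\alpha$-excessive majorant of $g$. This splits into three verifications — (1) $\Va$ is $\alpha$-excessive; (2) $\Va\geq g$ on all of $\I$; (3) $\Va$ is minimal among $\alpha$-excessive majorants — together with the identification of the stopping region, which will follow once we know $\Va=g$ exactly on $\stopping$.

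\medskip

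\emph{Step 1 ($\alpha$-excessivity).} By the Riesz representation \eqref{eq:alphaexcessive}, a function is $\alpha$-excessive precisely when it is the Green potential (plus boundary terms along $\phia,\psia$) of a \emph{non-negative} Radon measure. Here $\Va(x)=\int_\stopping \Ga(x,y)\nu(dy)$, and by hypothesis $\nu\restriction_\stopping\geq0$, so the candidate representing measure $\mu:=\ind{\stopping}\nu$ is non-negative; we must also check it is Radon (locally finite) near $\ell$ and $r$ and account for possible mass that $\nu$ puts at the endpoints, rewriting $\int \Ga(x,\ell)\,(\cdot)$ as a multiple of $\phia$ and similarly at $r$ — this is where conditions (a) and (b) re-enter, guaranteeing that the "leaked" $\phia$- and $\psia$-parts coming from the $\ci_i$ adjacent to a finite endpoint vanish, so the representation is consistent. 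Once $\Va$ is exhibited in the form \eqref{eq:alphaexcessive} with a non-negative measure, $\alpha$-excessivity is immediate.

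\medskip

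\emph{Step 2 (majorization and continuous fit).} On $\continuation$ the inequality $\Va\geq g$ is assumed outright. On $\stopping$ I want the reverse-free statement $\Va=g$. Write $g=\int_\I\Ga(x,y)\nu(dy)$ from \eqref{eq:mug}; then
\begin{equation*}
g(x)-\Va(x)=\int_{\continuation}\Ga(x,y)\nu(dy)=\sum_{i=1}^m\int_{\ci_i}\Ga(x,y)\nu(dy).
\end{equation*}
For $x\in\stopping$ and a fixed component $\ci_i=(\ell_i,r_i)$, either $x\leq\ell_i$ or $x\geq r_i$; in the first case $\Ga(x,y)=\wa^{-1}\psia(x)\phia(y)$ throughout $\ci_i$, so the $i$-th integral equals $\wa^{-1}\psia(x)\int_{\ci_i}\phia(y)\nu(dy)$, which is $0$ by (a) (noting that $x\leq\ell_i$ with $x\in\I$ is exactly the hypothesis "there is $x\in\I$ with $x<y$ for all $y\in\ci_i$", up to the boundary point, which is handled by continuity of $\Ga$ in its first variable). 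Symmetrically the case $x\geq r_i$ vanishes by (b). Hence $g=\Va$ on $\stopping$, and in particular $\Va\geq g$ everywhere. This same computation, read at $x=\ell_i$ or $x=r_i$, gives the continuous fit $\Va(\ell_i)=g(\ell_i)$, $\Va(r_i)=g(r_i)$.

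\medskip

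\emph{Step 3 (minimality) and identification of $\stopping$.} Let $u$ be any $\alpha$-excessive majorant of $g$. On each $\ci_i$, $\Va$ solves $\alpha\Va=\igen\Va$ (it is a Green potential of a measure supported off $\ci_i$, hence $\alpha$-harmonic there) with boundary values $g(\ell_i),g(r_i)$, or with the appropriate decay if $\ell_i=\ell$ or $r_i=r$; by \eqref{eq:hitting} this means $\Va(x)=\E_x\!\big(\ea{\hit{\partial\ci_i}}g(X_{\hit{\partial\ci_i}})\big)$ for $x\in\ci_i$, where $\hit{\partial\ci_i}$ is the exit time of $\ci_i$. Since $u\geq g$ and $u$ is $\alpha$-excessive (hence $\alpha$-supermartingale along the stopped process by optional sampling), $u(x)\geq\E_x\!\big(\ea{\hit{\partial\ci_i}}u(X_{\hit{\partial\ci_i}})\big)\geq\E_x\!\big(\ea{\hit{\partial\ci_i}}g(X_{\hit{\partial\ci_i}})\big)=\Va(x)$; on $\stopping$, $u\geq g=\Va$ directly. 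So $\Va$ is the least $\alpha$-excessive majorant, hence the value function by Dynkin's characterization, and $\stopping=\{x:\Va(x)=g(x)\}$ is the stopping region with $\tau^*$ as in \eqref{eq:osr}.

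\medskip

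The main obstacle I anticipate is \emph{Step 1}, specifically the careful bookkeeping of boundary behaviour: showing $\ind{\stopping}\nu$ is genuinely a Radon measure on $[\ell,r]$ (so that \eqref{eq:alphaexcessive} applies) when the boundaries are, say, natural or entrance, and correctly absorbing any endpoint atoms of $\nu$ into the $\phia,\psia$ terms. The infinite case $m=\infty$ also needs a word — convergence of the series $\sum_i\int_{\ci_i}\Ga(x,y)\nu(dy)$ is not automatic and must be extracted from $g=\int_\I\Ga(x,y)\nu(dy)<\infty$ together with the sign information, e.g. by a dominated/monotone convergence argument splitting $\nu$ into its positive and negative parts. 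Steps 2 and 3 are then essentially the same linear-algebra and optional-sampling computations already used in Remark \ref{remark:formaVgeneral} and in the proof of the main theorem.
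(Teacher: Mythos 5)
Your proposal is correct and follows essentially the same route as the paper's proof: Dynkin's characterization, $\alpha$-excessivity of $\Va$ from the non-negativity of $\nu$ on $\stopping$, the vanishing of $\int_{\ci_i}\Ga(x,y)\nu(dy)$ for $x\in\stopping$ via hypotheses (a)--(b) (this is exactly Lemma \ref{lem:waeqg}) to get $\Va=g$ on $\stopping$, and harmonicity of $\Va$ on the intervals $\ci_i$ to conclude optimality. The only minor variation is the final step: the paper uses Proposition \ref{lem:Wa} to show directly that $\Va(x)=\Ex{x}{\ea{\hit{\stopping}}g(X_{\hit{\stopping}})}\leq\sup_\tau\Ex{x}{\ea{\tau}g(X_\tau)}$, thereby also exhibiting $\hit{\stopping}$ as optimal, whereas you dominate $\Va$ by an arbitrary $\alpha$-excessive majorant through optional sampling; both arguments rest on the same harmonicity plus continuous fit and are equivalent here.
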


\begin{remark} \label{remark:formaVgeneral2} With the same arguments given in Remark \ref{remark:formaVgeneral} we obtain the alternative representation for $\Va$, given in \eqref{eq:VaMasHumana}.
\end{remark}
\begin{proof}
Based on Theorem 3.3.1 in \citet{Shiryaev:2008} we know that $g$ satisfies Dynkin's characterization.
The strategy for the proof is then to verify that $\Va$ is the minimal $\desc$-excessive function 
that dominates the reward function $g$.
By the definition of $\Va$, 
and taking into account that $\nu$ is a non-negative measure in $\stopping$, 
we conclude that $\Va$ is an $\desc$-excessive function. 
Applying Lemma \ref{lem:waeqg} with $\Wa:=\Va$, 
we conclude that $\Va(x)$ and $g(x)$ are equal for $x\in\stopping$, 
which in addition to the hypothesis $\Va(x)\geq g(x)$ for all $x\in \stopping^c$ allow us to conclude that $\Va$ is a majorant of the reward. 
So far, we know
\begin{equation*}
\sup_\tau \Ex{x}{\ea{\tau}g(X_\tau)}\leq \Va(x). 
\end{equation*}
From Lemma \ref{lem:waeqg} --in the first equality-- we get
\begin{align*}
\Va(x)&=\Ex{x}{\ea{\hit{\stopping}}g(X_{\hit{\stopping}})} \leq \sup_\tau \Ex{x}{\ea{\tau}g(X_\tau)}, 
\end{align*}
that proves the other inequality holds as well. From the previous equation we also conclude that $\stopping$ is the stopping region.
\end{proof}

Comparing Theorem \ref{theorem:main} and Theorem \ref{teo:general2}, it should be emphasized that the former gives a characterization of the solution and a method to find it, while the latter is just a verification theorem, which of course, also suggests a method to find the solution. 
However, Theorem \ref{teo:general2} has less restrictive hypothesis and, although we do not include it here, an algorithm to find the continuation region may be developed, at least when the region in which the measure $\nu$ is negative, is a finite union of intervals; in fact, Algorithm \ref{algor} would be a particular case of this algorithm when considering $\nu(dy)=(\desc-\igen)g(y)m(dy)$.

\section{Examples}\label{section:examples}

In order to apply Theorem \ref{theorem:main} it is necessary to check the inversion formula.
This requires essentially two conditions: enough smoothness and a proper behavior at infinity. 
A reasonable behavior at infinite is the following:
\begin{equation} \label{eq:gOverPsi4}
 \lim_{z\uparrow r} \frac{g(z)}{\psia(z)}=\lim_{z\downarrow\ell} \frac{g(z)}{\phia(z)}=0,
\end{equation}
(For other behaviors see Theorem 6.3 in \cite{LambertonZervos:2013}.)
These conditions are useful to verify the inversion formula \eqref{eq:inversionformula}
for a smooth function, 
as stated in the following result.

\begin{proposition} \label{propInversion4}
Assume that  $\I=(\ell,r)$, 
that for $g\colon\I\to \R$ the differential operator is defined for all $x\in\I$,
and that
\begin{equation} \label{eq:alIntegrable4}
 \int_{\I}\Ga(x,y) |(\desc-\igen)g(y)| m(dy)<\infty.
\end{equation}
Take sequences $\ell_n\downarrow\ell$ and $r_n\uparrow r$ s.t. for each $n$ there exists a function $g_n \in \D(\igen)$ such that $g_n(x)=g(x)$ for all $x\in(\ell_{n+1},r_{n+1})$.
Then, if \eqref{eq:gOverPsi4} holds, the inversion formula \eqref{eq:inversionformula} holds true.
\end{proposition}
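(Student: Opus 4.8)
The plan is to establish the inversion formula \eqref{eq:inversionformula} by approximating $g$ with the functions $g_n\in\D(\igen)$ on the increasing sequence of intervals $(\ell_n,r_n)$, applying Dynkin's formula \eqref{eq:dynkinFormula} (equivalently, the identity $\Ra(\desc-\igen)g_n=g_n$) on each piece, and then passing to the limit using \eqref{eq:alIntegrable4} and the boundary behavior \eqref{eq:gOverPsi4}. First I would fix $x\in\I$ and, for $n$ large enough that $x\in(\ell_{n+1},r_{n+1})$, consider the stopping time $\tau_n=\hit{\ell_{n+1}}\wedge\hit{r_{n+1}}$, i.e. the first exit time from $(\ell_{n+1},r_{n+1})$. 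Since $g_n\in\D(\igen)$ and $g_n=g$ on that interval, Dynkin's formula applied to $g_n$ at $x$ gives
\begin{equation*}
g(x)=g_n(x)=\E_x\!\left(\int_0^{\tau_n}\ea{t}(\desc-\igen)g(X_t)\,dt\right)+\E_x\!\left(\ea{\tau_n}g(X_{\tau_n})\right),
\end{equation*}
where in the integral term we used that $X_t\in(\ell_{n+1},r_{n+1})$ for $t<\tau_n$, so $(\desc-\igen)g_n(X_t)=(\desc-\igen)g(X_t)$.

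Next I would rewrite the first term on the right as an integral of the Green function. By the occupation-time formula for the killed process (the resolvent density of $X$ killed on exiting $(\ell_{n+1},r_{n+1})$ with respect to $m$ is the Green function $G_\alpha^{(\ell_{n+1},r_{n+1})}$ of that killed diffusion), one has
\begin{equation*}
\E_x\!\left(\int_0^{\tau_n}\ea{t}(\desc-\igen)g(X_t)\,dt\right)=\int_{(\ell_{n+1},r_{n+1})}G_\alpha^{(\ell_{n+1},r_{n+1})}(x,y)\,(\desc-\igen)g(y)\,m(dy),
\end{equation*}
and $G_\alpha^{(\ell_{n+1},r_{n+1})}(x,y)\uparrow\Ga(x,y)$ monotonically as $n\to\infty$. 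Splitting $(\desc-\igen)g$ into positive and negative parts and invoking the monotone convergence theorem on each, together with the integrability hypothesis \eqref{eq:alIntegrable4}, the right-hand side converges to $\int_{\I}\Ga(x,y)(\desc-\igen)g(y)\,m(dy)$.

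It then remains to show that the boundary term $\E_x(\ea{\tau_n}g(X_{\tau_n}))$ tends to $0$. Conditioning on which endpoint is hit and using the strong Markov property together with \eqref{eq:hitting}, this term equals
\begin{equation*}
\frac{g(\ell_{n+1})}{\phia(\ell_{n+1})}\,\E_x\!\left(\ea{\hit{\ell_{n+1}}}\ind{\hit{\ell_{n+1}}<\hit{r_{n+1}}}\right)\phia(x)\ \text{-type terms},
\end{equation*}
more precisely a convex combination dominated by $\frac{g(\ell_{n+1})}{\phia(\ell_{n+1})}\phia(x)+\frac{g(r_{n+1})}{\psia(r_{n+1})}\psia(x)$ after bounding the hitting-time Laplace transforms by $1$ and using monotonicity of $\phia,\psia$; since $x$ is fixed and \eqref{eq:gOverPsi4} forces $g(\ell_{n+1})/\phia(\ell_{n+1})\to0$ and $g(r_{n+1})/\psia(r_{n+1})\to0$, this bound vanishes. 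Passing to the limit in Dynkin's formula then yields \eqref{eq:inversionformula}.

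The main obstacle I anticipate is the careful justification of the two limiting arguments simultaneously: one must ensure that the decomposition into positive and negative parts of $(\desc-\igen)g$ is legitimate (so that no $\infty-\infty$ arises), which is exactly what \eqref{eq:alIntegrable4} secures, and that the boundary estimate is uniform enough — in particular that the relevant Laplace transforms of hitting times can indeed be bounded so that the whole boundary term is controlled by the two ratios in \eqref{eq:gOverPsi4}. A secondary technical point is identifying the killed resolvent density with $G_\alpha^{(\ell_{n+1},r_{n+1})}$ and its monotone convergence to $\Ga$; this is standard for one-dimensional diffusions (see \cite{BorodinSalminen:2002}) but should be cited precisely.
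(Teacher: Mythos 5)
Your proposal is correct and follows essentially the same route as the paper: apply Dynkin's formula \eqref{eq:dynkinFormula} to $g_n$ at the exit time of an interval compactly contained in $\I$, kill the boundary term via \eqref{eq:hitting} and \eqref{eq:gOverPsi4}, and pass to the limit in the integral term using \eqref{eq:alIntegrable4}. The only (harmless) technical deviation is that you identify the integral term with the killed-diffusion Green function and use monotone convergence on the positive and negative parts, whereas the paper writes the limit as $\Ra(\desc-\igen)g(x)$ via Fubini and concludes by dominated convergence, with \eqref{eq:alIntegrable4} providing the dominating bound in both cases.
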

\begin{proof}
Under the condition \eqref{eq:alIntegrable4}, an application of Fubini's Theorem gives
\begin{equation*} 
\int_{\I} \Ga(x,y) (\desc-\igen)g(y) m(dy)=\Ra (\desc-\igen)g(x). 
\end{equation*}
Let $\tau_n$ be the hitting time of the set $\I\setminus (\ell_n,r_n)$, defined by
\begin{equation*}
\tau_n:=\inf\{t\geq 0\colon X_t \notin (\ell_n,r_n)\}.
\end{equation*}
Consider $x\in (r_n,\ell_n)$. We have $\tau_n=\inf\{\hit{r_n},\hit{\ell_n}\}$.
By the continuity of the paths it can be concluded that $\tau_n \to \infty,\ (n\to\infty)$. 
Applying Dynkin's formula \eqref{eq:dynkinFormula} to $g_n$ and $\tau_n$ we obtain
\begin{equation*}
g_n(x)=\Ex{x}{\int_0^{\tau_n} \ea{t} (\desc-\igen)g_n(X_t) dt}+ \Ex{x}{\ea{\tau_n}g_n(X_{\tau_n})},
\end{equation*}
taking into account that $g_n(x)=g(x)$ and $(\desc-\igen)g(x)=(\desc-\igen)g_n(x)$ for $\ell_{n+1} <x <r_{n+1}$, from the previous equality follows that
\begin{equation}\label{eq:paraconvdominada4}
g(x)=\Ex{x}{\int_0^{\tau_n} \ea{t} (\desc-\igen)g(X_t) dt}+ \Ex{x}{\ea{\tau_n}g(X_{\tau_n}}).
\end{equation}
About the second term on the right-hand side of the previous equation we have
\begin{align*}
\Ex{x}{\ea{\tau_n}g(X_{\tau_n}})
	&=\Ex{x}{\ea{\hit{r_n}} g(X_{\hit{r_n}})\ind{\{\hit{r_n}<\hit{\ell_n}\}}}
	\\&\qquad
	+\Ex{x}{\ea{\hit{\ell_n}} g(X_{\hit{\ell_n}})\ind{\{\hit{\ell_n}<\hit{r_n}\}}}\\
	&\leq \Ex{x}{\ea{\hit{r_n}} g(X_{\hit{r_n}})}+\Ex{x}{\ea{\hit{\ell_n}} g(X_{\hit{\ell_n}})}\\
	&=\psia(x) \frac{g(r_n)}{\psia(r_n)}+\phia(x)\frac{g(\ell_n)}{\phia(\ell_n)},
\end{align*}
by \eqref{eq:hitting},
which taking the limit as $n\to \infty$ vanishes, by \eqref{eq:gOverPsi4}.
Finally, we can apply Fubini's theorem, and dominated convergence theorem to conclude that the limit as $n\to \infty$ of the first term on the right-hand side of \eqref{eq:paraconvdominada4} is
\begin{equation*}
\int_{\I} \Ga(x,y) (\desc-\igen)g(y) m(dy),
\end{equation*}
thus completing the proof.
\end{proof}

\subsection{Implementation}
\label{sec:implementation}
To compute in practice the optimal stopping region, following the Algorithm \ref{algor}, it can be necessary a computational implementation of some parts of the algorithm. In fact, to solve our examples we have implemented a script in R 
\citep{Rsoftware} that receives as input:
\begin{itemize}
\item[-] the function $(\desc-\igen)g$;
\item[-] the density of the speed measure $m$;
\item[-] the atoms of the speed measure $m$;
\item[-] the functions $\phia$ and $\psia$;
\item[-] two numbers $a$, $b$ that are interpreted as the left and right endpoint of an interval $\ngi$
\end{itemize}
and produce as output two numbers $a'\leq a$, $b'\geq b$ such that $(\ngi,(a',b'))$ satisfy Condition \ref{cond:continuation}. It is assumed that the interval $\ngi$ given as input satisfies the necessary conditions to ensure the existence of $\ngi'$.
To compute $a'$ and $b'$ we use a discretization of the given functions and compute the corresponding integrals numerically. We follow the iterative procedure presented in the proof of Lemma \ref{lemma:enlargement}.
Using this script the examples are easily solved following Algorithm \ref{algor}.

\subsection{Brownian motion with polynomial reward}

Theorem \ref{theorem:main} is specially suited for non-monotone reward functions. 
In the following two examples we consider the same process and reward function with to different
discount values: $\alpha=2$ and $\alpha=1.5$. 
It is known that the stopping region increases with the discount (Prop. 1 in \cite{MordeckiSalminen:2019a}).
More interesting, the algorithm \ref{algor} finds no intersection in the first case 
(so it is not necessary to go back to the iterative step)
but finds an intersection in the second case 
(and goes back to the iterative step). 
As a result in the first case the continuation region has three components, 
and in the second case two.
Furthermore, it is clear that for $\alpha$ small enough, the problem is one sided.
\begin{example}[$\alpha$=2] Consider a standard Brownian motion $X$. Consider the reward function $g$ defined by
\begin{equation*}
g(x):=-(x-2)(x-1)x(x+1)(x+2),
\end{equation*}
and the discount factor $\desc=2$. 
To solve the optimal stopping problem \eqref{eq:osp}, by the application of Algorithm \ref{algor}, we start by finding the set $(\desc-\igen)g(x)<0$. 
As the infinitesimal generator is given by $\igen g(x)=g''(x)/2$, after computations, we find that
$$
\negative=\{x\colon (\desc-\igen)g(x)<0\}=\bigcup_{i=1}^3 \ngi_i,
$$
with $\ngi_1\simeq (-2.95,-1.15)$, $\ngi_2\simeq (0,1.15)$ and $\ngi_3 \simeq (2.95,\infty)$. Computing ${\ci_i}$, as is specified in the (base step) of the algorithm in the proof of Theorem \ref{theorem:main}, we find
${\ci_1}\simeq (-3.23,-0.50)$, ${\ci_2}\simeq (-0.36,1.43)$ and ${\ci_3} \simeq (1.78,\infty)$. 
Observing that these intervals are disjoint we conclude that the continuation region is given by 
${\ci_1}\cup {\ci_2} \cup {\ci_3}$. 
Now, by the application of equation \eqref{eq:VaMasHumana}, we find the value function, which is shown in Fig. \ref{fig:poly-alpha2}. Note that the smooth fit principle holds in the five contact point.
\begin{figure}
 \begin{center} 
\includegraphics[scale=.6]{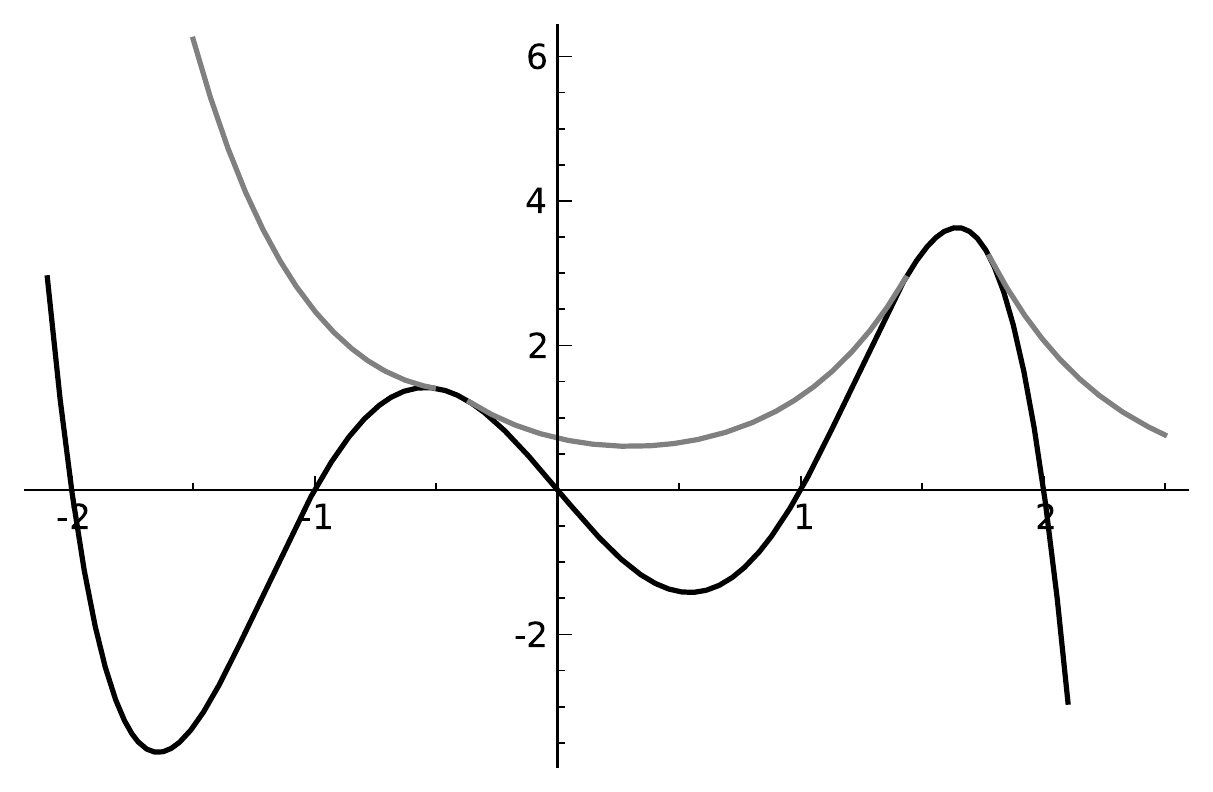}
\caption{\label{fig:poly-alpha2} OSP for the standard BM and a 5th. degree polynomial: $g$ (black), $\Va$ (gray, when different from $g$). Parameter $\desc=2$. }
 \end{center}
\end{figure}
\end{example}
\begin{example}[$\alpha$=1.5]
Consider the process and the reward as in the previous example but with a slightly smaller discount, $\desc=1.5$. 
We have again
\[\{x\colon (\desc-\igen)g(x)<0\}=\bigcup_{i=1}^3 \ngi_i,
\]
but with $\ngi_1\simeq (-3.21,-1.17)$, $\ngi_2\simeq (0,1.17)$ and $\ngi_3 \simeq (3.21,\infty)$. 
Computing ${\ci_i}$ we obtain ${\ci_1}\simeq (-3.53,-0.31)$, ${\ci_2}\simeq (-0.39,1.46)$ 
and ${\ci_3} \simeq (1.76,\infty)$. In this case ${\ci_1}\cap {\ci_2}\neq \emptyset$, therefore, 
according to the algorithm, we have to consider $\ngi_1\simeq (-3.21,1.17)$, obtaining ${\ci_1}\simeq (-3.53,1.46)$. 
Now we have two disjoint intervals and the algorithm is completed. 
The continuation region, shown in Fig. \ref{fig:poly-alpha1-5}, is 
\[ 
\continuation\simeq (-3.53,1.46)\cup (1.76,\infty).
\]
\begin{figure}
 \begin{center} 
\includegraphics[scale=.6]{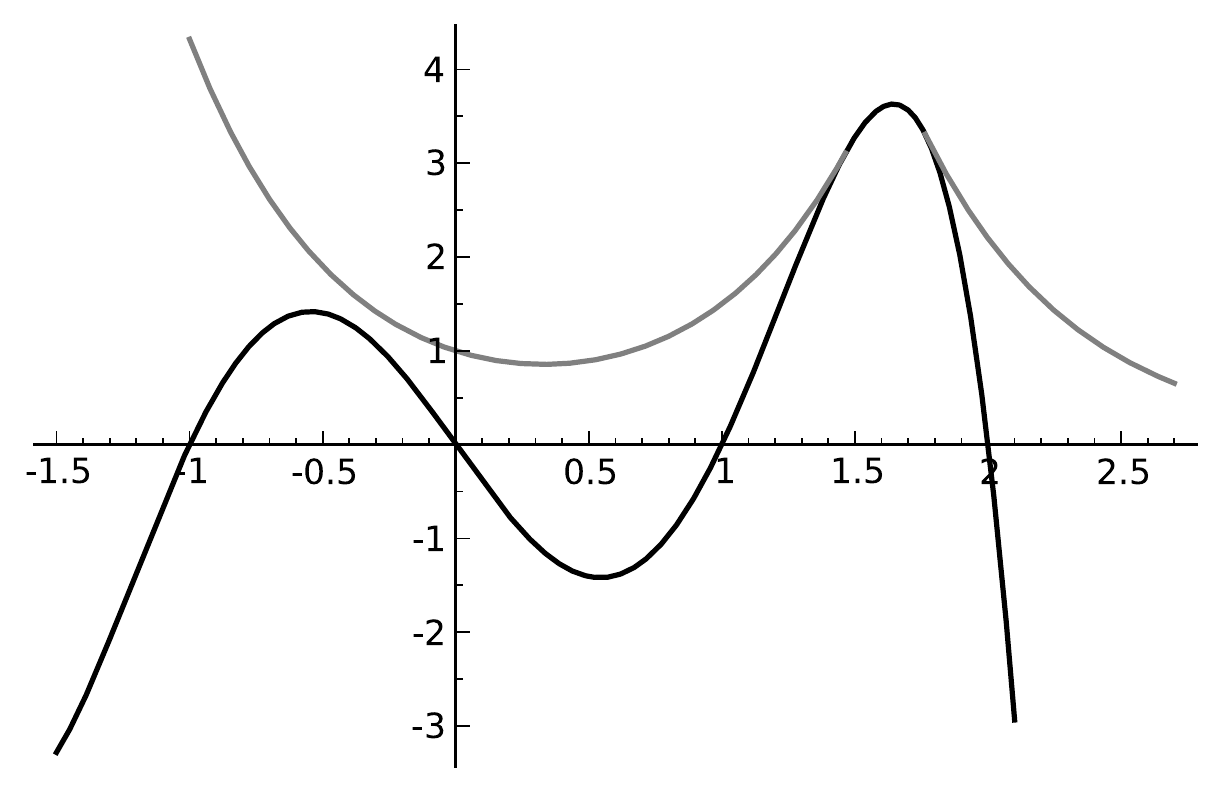}
\caption{\label{fig:poly-alpha1-5} OSP for the standard BM and a 5th. degree polynomial: $g$ (black), $\Va$ (gray, when different from $g$). Parameter $\desc=1.5$. }
\end{center}
\end{figure}
\end{example}
\subsection{Example: A non-differentiable reward}
Consider the OSP with reward $g\colon \R \to \R$ given by 
\begin{equation*}
g(x)=
\begin{cases}
x,& x<1,\\
-x+2,& 1\leq x \leq 2, \\
x-2 & x>2.
\end{cases}
\end{equation*}
This is the function already presented above. 
It satisfies \eqref{eq:mug} with $\nu$ given by \eqref{eq:nu}.
Consider the discount factor $\desc=1$. The measure $\nu$ is negative in $(-\infty,0)$ and in $\{2\}$. Computing exactly in the first case, and by numerical approximation in the second (by following a variant of Algorithm \ref{algor}), we manage to find two disjoint intervals $\ngi_1\simeq(-\infty,1/\sqrt2)$ and $\ngi_2\simeq(1.15,2.85)$ that satisfy the conditions of Theorem \ref{teo:general2}. 
For $\Va$, we have  the expression given in Remark \ref{remark:formaVgeneral2}, which considering $\psia(x)=e^{\sqrt{2\desc}x}$ and $\phia(x)=e^{-\sqrt{2\desc}x}$ in the particular case $\desc=1$, renders\footnote{we approximate the roots.}
\begin{equation*}
V_1(x)=
\begin{cases}
k_2^1 e^{\sqrt2 x},& x<\frac1{\sqrt2},\\
x,& \frac1{\sqrt2} \leq x \leq 1,\\
-x+2,& 1<x\leq 1.15,\\
k_1^2 e^{-\sqrt2 x} + k_2^2 e^{\sqrt2 x},& 1.15<x<2.85,\\
x-2,&x\geq 2.85;
\end{cases}
\end{equation*} 
with $k_2^1= \frac1{e\sqrt2}\simeq 0.26$, $k_1^2\simeq 3.96$ and $k_2^2\simeq 0.013$.
\begin{figure}
\begin{center}
\includegraphics[scale=.6]{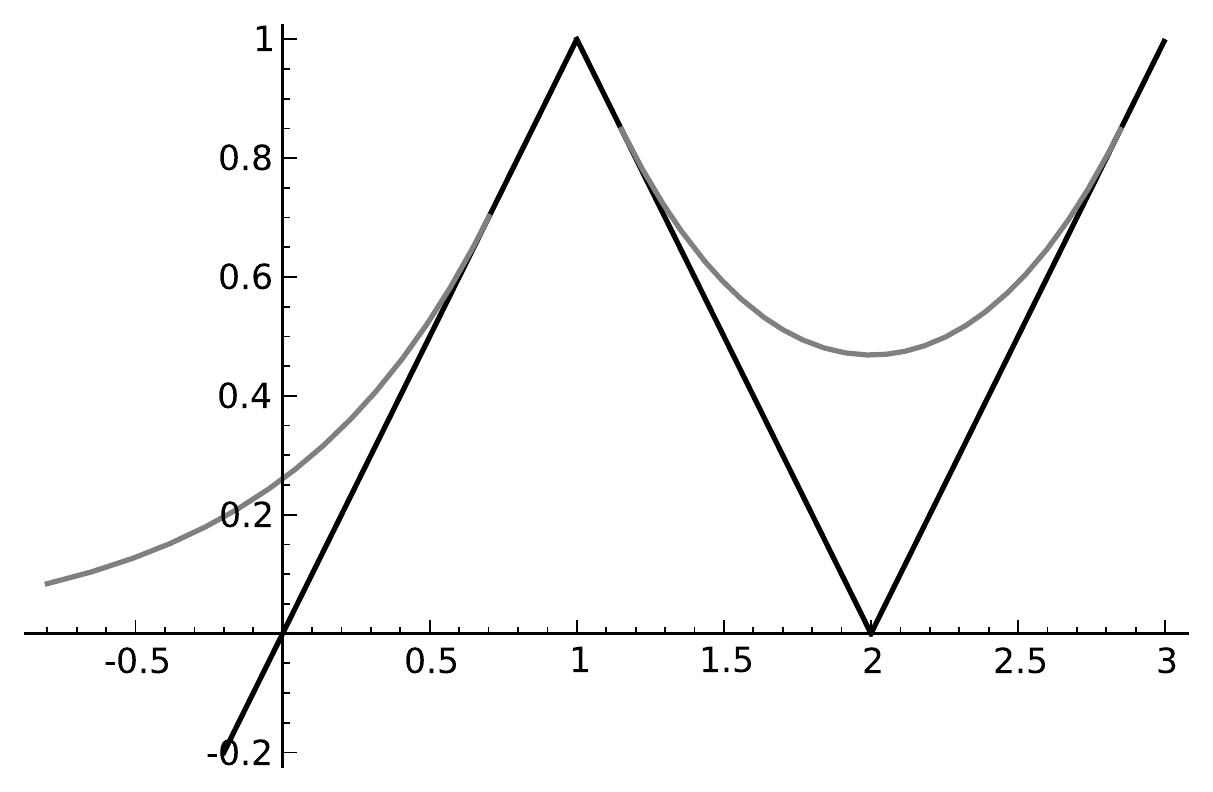}
\end{center}
\caption{\label{fig:mug} OSP for the standard BM and irregular reward: $g$ (black), $V_1$ (gray, when different from $g$).}
\end{figure}
In Figure \ref{fig:mug} we show the reward function $g$ and the value function $V_1$.

\section{Proof of the main result}\label{section:proof}

To begin with the proof, we first observe that 
for a diffusions defined as in section \ref{section:main}, 
excessive functions are continuous (see 29 in \cite{BorodinSalminen:2002}), 
and for the OSP in \eqref{eq:osp},
Theorem 6 pp.~137 in \cite{Shiryaev:2008} is applicable, 
giving that the optimal stopping rule exists and has the form \eqref{eq:osr} with stopping set \eqref{eq:stopping}.
In consequence, as both $g$ and $V$ are continuous functions, the set $\stopping$ is closed.
We follow by presenting a few preliminary results.
\begin{proposition}[Harmonicity]
\label{lem:Wa}
{\rm(a)} Consider $x\in[a,b]\subset\I$ and  
\begin{equation*}
\hit{ab}:=\inf\{t\colon X_t=a\}\wedge\inf\{t\colon X_t=b\}.
\end{equation*}
Then, if $a>\ell$
\begin{equation*}
\phia(x)=\Ex{x}{\ea{\hit{ab}}\phia(X_{\hit{ab}})},
\end{equation*}
and,  if $b<r$ 
\begin{equation*}
\psia(x)=\Ex{x}{\ea{\hit{ab}}\psia(X_{\hit{ab}})}.
\end{equation*}
{\rm(b)} Consider the function $\Wa\colon\I \to \R$ such that
\begin{equation*}
\Wa(x)=\int_S \Ga(x,y)\sigma(dy),
\end{equation*}
where $\sigma$ is a postive measure and the set $S$ is
\begin{equation*}
S=\I \setminus \cup_{i=1}^n \ngi_i,
\end{equation*} 
where $n$ could be infinite, and $\ngi_i$ are disjoint intervals included in $\I$.
Then, the function $\Wa$ satisfies
\begin{equation*}
\Wa(x)=\E_x \left(\ea{\hit{S}} \Wa(X_{\hit{S}})\right).
\end{equation*}
\end{proposition}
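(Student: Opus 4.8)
The plan is to treat the two parts separately, as they require genuinely different ideas. Part (a) is essentially a restatement of the harmonicity of $\phia$ and $\psia$ inside the domain, and I would derive it from the hitting-time formula \eqref{eq:hitting} together with the strong Markov property. Concretely, for $x\in[a,b]$ with $a>\ell$, on the event $\{\hit a<\hit b\}$ we have $X_{\hit{ab}}=a$, and on $\{\hit b<\hit a\}$ we have $X_{\hit{ab}}=b$; so the right-hand side equals $\phia(a)\Ex{x}{\ea{\hit a}\ind{\hit a<\hit b}}+\phia(b)\Ex{x}{\ea{\hit b}\ind{\hit b<\hit a}}$. To evaluate these I would use the standard two-sided exit identities for a diffusion (which follow from \eqref{eq:hitting} and the strong Markov property applied at $\hit a$ and $\hit b$): $\Ex{x}{\ea{\hit a}\ind{\hit a<\hit b}}$ and $\Ex{x}{\ea{\hit b}\ind{\hit b<\hit a}}$ are exactly the coefficients $c_1,c_2$ for which $c_1\phia+c_2\psia$ interpolates a given function's values at $a$ and $b$; applying this with the function $\phia$ itself gives $c_1\phia(a)+c_2\psia(a)=\phia(a)$ pointwise because $\phia$ solves $\desc u=\igen u$, and likewise at $b$, so the combination collapses to $\phia(x)$. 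The argument for $\psia$ is identical. Alternatively, and perhaps more cleanly, one can invoke Dynkin's formula \eqref{eq:dynkinFormula}: since $\phia$ (resp. $\psia$) satisfies $(\desc-\igen)\phia=0$ in $(\ell,r)$ and $\hit{ab}$ keeps the process in the compact $[a,b]$, the integral term vanishes and \eqref{eq:dynkinFormula} yields the claim directly.

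For part (b), the statement is that $\Wa$ is $\desc$-harmonic on the open set $\I\setminus S=\bigcup_i \ngi_i$, i.e. unaffected by stopping at $\hit S$. I would localize: it suffices to prove, for each component $\ngi_i=(a_i,b_i)$ and each $x\in\ngi_i$, that $\Wa(x)=\Ex{x}{\ea{\hit{a_ib_i}}\Wa(X_{\hit{a_ib_i}})}$, since $\hit S\le \hit{a_ib_i}$ (in fact $\hit S=\hit{\{a_i,b_i\}}$ when starting inside $\ngi_i$, up to hitting the component's endpoints), and then a standard strong-Markov/optional-stopping patching argument over the (at most countably many) components upgrades the local identity to the global one $\Wa(x)=\Ex{x}{\ea{\hit S}\Wa(X_{\hit S})}$; for $x\in S$ the identity is trivial since $\hit S=0$. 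So the core is the local claim. Here I would use the Green-kernel representation \eqref{eq:Garepr}: for $x\in\ngi_i$ and $y\in S$, either $y\le a_i<x$ or $x<b_i\le y$, so $\Ga(x,y)=\wa^{-1}\psia(a_i\wedge\cdots)$—more precisely $\Ga(x,y)=\wa^{-1}\psia(y)\phia(x)$ when $y\le x$ and $\wa^{-1}\psia(x)\phia(y)$ when $y\ge x$, and since $S\cap\ngi_i=\emptyset$, for fixed $x\in\ngi_i$ the kernel $y\mapsto\Ga(x,y)$ restricted to $S$ splits as $\phia(x)$ times a function of $y$ on $S\cap(\ell,a_i]$ plus $\psia(x)$ times a function of $y$ on $S\cap[b_i,r)$. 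Hence $\Wa(x)=A_i\phia(x)+B_i\psia(x)$ on $\ngi_i$ for constants $A_i=\wa^{-1}\int_{S\cap(\ell,a_i]}\psia(y)\sigma(dy)$ and $B_i=\wa^{-1}\int_{S\cap[b_i,r)}\phia(y)\sigma(dy)$. Since $\phia,\psia$ are $\desc$-harmonic on $[a_i,b_i]\subset\I$ by part (a), so is the linear combination $A_i\phia+B_i\psia$; therefore $\Wa(x)=\Ex{x}{\ea{\hit{a_ib_i}}(A_i\phia+B_i\psia)(X_{\hit{a_ib_i}})}=\Ex{x}{\ea{\hit{a_ib_i}}\Wa(X_{\hit{a_ib_i}})}$, using that $\Wa$ and $A_i\phia+B_i\psia$ agree at $a_i$ and $b_i$ (they agree on all of $\overline{\ngi_i}$ by continuity, the endpoints lying in $S$).

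The main obstacle I anticipate is not any single computation but the patching step when $n=\infty$: one must make sure the identity $\Wa(x)=\Ex{x}{\ea{\hit S}\Wa(X_{\hit S})}$ survives passing from finitely many components to countably many, which requires a uniform-integrability or monotone/dominated-convergence argument for the family $\ea{\hit{S_k}}\Wa(X_{\hit{S_k}})$ along an exhaustion $S_k\uparrow S$, together with the fact that $\hit{S_k}\to\hit S$ by path-continuity of $X$ (so that $X_{\hit{S_k}}\to X_{\hit S}$ and $\ea{\hit{S_k}}\to\ea{\hit S}$ a.s.). Here one uses that $\Wa$ is $\desc$-excessive (being a Green-potential of a positive measure), so $t\mapsto \ea{t}\Wa(X_t)$ is a supermartingale, which gives exactly the domination/UI needed to pass to the limit; boundary behavior of $X$ being non-killing guarantees no mass is lost at $\ell$ or $r$. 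Once this is in place the proposition follows.
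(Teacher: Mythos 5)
Your proposal is correct, but it diverges from the paper's proof in part (a) and contains one suggestion that would not work as stated. For (a), the paper's argument is built around the fact that $\phia\notin\D(\igen)$: it picks $h\in\CC_b(\I)$ vanishing on $[a,r)$, sets $f=\Ra h\in\D(\igen)$, uses that $f=k\phia$ on $[a,r)$, and only then applies the discounted Dynkin formula \eqref{eq:dynkinFormula}, whose integral term vanishes up to $\hit{ab}$. Your ``cleaner alternative'' of applying \eqref{eq:dynkinFormula} directly to $\phia$ is therefore not justified -- that formula is stated only for $f\in\D(\igen)$, and circumventing this is precisely the point of the paper's detour. Your primary route, however, is sound and arguably more elementary: from \eqref{eq:hitting} and the strong Markov property at $\hit{b}$ (resp.\ $\hit{a}$) one gets $\phia(x)=\phia(a)\Ex{x}{\ea{\hit{a}}\ind{\{\hit{a}<\hit{b}\}}}+\phia(b)\Ex{x}{\ea{\hit{b}}\ind{\{\hit{b}<\hit{a}\}}}$ directly, with no resolvent construction. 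For (b) your argument is essentially the paper's with the order of integration swapped: the paper first proves the kernel identity $\Ga(x,y)=\Ex{x}{\ea{\hit{S}}\Ga(X_{\hit{S}},y)}$ for $y\in S$ using the factorization \eqref{eq:Garepr} and part (a), and then integrates against $\sigma(dy)$ (Tonelli, everything nonnegative); you integrate first, obtaining $\Wa=A_i\phia+B_i\psia$ on the component containing $x$, and then apply (a) to that combination -- same factorization, same content. Finally, the ``main obstacle'' you anticipate is a non-issue: for a fixed starting point $x\in\ngi_i=(a_i,b_i)$, path continuity gives $\hit{S}=\hit{a_ib_i}$ (the endpoints of the component of $\I\setminus S$ containing $x$ belong to $S$), so your local identity already is the global one and no exhaustion, uniform integrability, or supermartingale argument over countably many components is needed; this is exactly the paper's implicit observation that $\hit{S}=\hit{ab}$, with the components touching $\ell$ or $r$ handled by the corresponding one-sided statements in (a).
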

\begin{proof}
(a) Let us proof the first statement, which is a direct consequence of the discounted Dynkin's formula for functions that belong to $\D(\igen)$. As $\phia\notin \D(\igen)$, we consider a function $h\in \CC_b(\I)$ such that $h(x)=0$ for $x\geq a$ and $h(x)>0$ for $x<a$.
Then $f$ defined by $f(x):=\left(\Ra h \right)(x)$ belongs to $\D(\igen)$ and there exist a constant $k>0$ such that for  $x\geq a$, $f(x)=k \phia(x)$ 
\citep[see][section 4.6]{ItoMcKean:1974}. 
The discounted Dynkin's formula holds for $f$, so, for $x\geq a$,
\begin{equation*}
f(x)-\Ex{x}{\ea{\hit{ab}}f(X_{\hit{ab}})}=\Ex{x}{\int_0^{\hit{ab}}(\desc-\igen)f(X_t) dt}.
\end{equation*}
From the continuity of the paths, for $t\in[0,\hit{ab}]$, $X_t\geq a$ and $(\desc-\igen)f(X_t)=h(X_t)=0$, so the right-hand side of the previous equation vanishes. Finally taking into account the relation between $f$ and $\phia$ the conclusion follows.
The second statement is proved in an analogous way.
\par\noindent(b)
If $x\in S$ the result is trivial, because $\hit{S}\equiv 0$. Let us consider the case $x\notin S$. In this case $x \in \ngi_i$ for some $i$; we move on to prove that
\begin{equation*}
\Ga(x,y) = \Ex{x}{\ea{\hit{S}}\Ga(X_{\hit{S}},y)}
\end{equation*} 
for all $y$ in $S$. To see this, let us denote by $a=\inf \ngi_i$ and $b=\sup \ngi_i$, and observe that $\hit{S}=\hit{ab}$. If $b < r$ and $y \geq b$ we have
$\Ga(x,y)=\wa^{-1}\psia(x)\phia(y)$ 
and by (a) we get
\begin{align*}
\Ga(x,y)&= \wa^{-1} \Ex{x}{\ea{\hit{ab}}\psia(X_{\hit{ab}})}\phia(y)\\
&= \Ex{x}{\ea{\hit{ab}}\Ga(X_{\hit{ab}},y)},
\end{align*}
where in the second equality we have used again \eqref{eq:Garepr} and the fact that $\hit{ab}\leq y$. In the case $y\leq a$ we have to do the analogous computation.
Now we can write
\begin{align*}
\Wa(x)&=\int_S \Ga(x,y)\sigma(dy)
=\int_S \Ex{x}{\ea{\hit{S}}\Ga(X_{\hit{S}},y)} \sigma(dy)\\
&=\Ex{x}{\ea{\hit{S}}\int_S \Ga(X_{\hit{S}},y) \sigma(dy)}
=\Ex{x}{\ea{\hit{S}}\Wa(X_{\hit{S}},y)},
\end{align*}
and the result follows.
\end{proof}

\begin{lemma}[Representation]
\label{lem:waeqg}
Let $X$ be a one-dimensional diffusion and consider the function $g\colon \I \to \R$ defined by
\begin{equation*}
g(x):=\int_{\I} \Ga(x,y) \sigma(dy),
\end{equation*}
where $\sigma$ is a signed measure on $\I$. Consider the
 function $\Wa\colon {\I} \to \R$ defined by 
\begin{equation}
\label{eq:defWa2}
\Wa(x):=\int_{S} \Ga(x,y) \sigma(dy),
\end{equation}
where the set $S$ is
\begin{equation*}
S:=\I \setminus \cup_{i=1}^m \ci_i,
\end{equation*} 
where $m$ could be infinite, 
the intervals $\ci_i\subset \I$ are pairwise disjoint, 
and
\begin{enumerate}[\rm(a)]
\item $\int_{\ci_i}\phia(y)\sigma(dy)=0$ if there is some $x\in \I$ such that $x<y$ for all $y\in \ci_i$,
\item $\int_{\ci_i}\psia(y)\sigma(dy)=0$ if there is some $x\in \I$ such that $x>y$ for all $y\in \ci_i$,
\end{enumerate}

Then $g(x)=\Wa(x)$ for all $x\in S$. 
\end{lemma}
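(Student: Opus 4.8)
The plan is to show that the difference $g(x)-\Wa(x) = \int_{\cup_i \ci_i} \Ga(x,y)\sigma(dy)$ vanishes for $x\in S$. Since $S$ and the $\ci_i$ are disjoint, fix $x\in S$ and look at one interval $\ci_i = (\ell_i, r_i)$ at a time; it suffices to prove $\int_{\ci_i}\Ga(x,y)\sigma(dy)=0$ whenever $x\notin\ci_i$. Because $x\notin\ci_i$ and $\ci_i$ is an interval, either $x \le \ell_i$ (so $x<y$ for all $y\in\ci_i$, up to the endpoint) or $x \ge r_i$ (so $x>y$ for all $y\in\ci_i$). Consider the first case. For $y\in\ci_i$ we have $x\le y$, hence by \eqref{eq:Garepr}, $\Ga(x,y)=\wa^{-1}\psia(x)\phia(y)$, so that
\begin{equation*}
\int_{\ci_i}\Ga(x,y)\sigma(dy) = \wa^{-1}\psia(x)\int_{\ci_i}\phia(y)\sigma(dy) = 0
\end{equation*}
by hypothesis (a), since the existence of $x\in\I$ with $x<y$ for all $y\in\ci_i$ is exactly what was assumed. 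The second case is symmetric, using that $\Ga(x,y)=\wa^{-1}\psia(y)\phia(x)$ for $y\le x$ and invoking hypothesis (b).

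One point that requires a small amount of care is the behaviour at the endpoints of $\ci_i$ and the question of which of hypotheses (a), (b) applies when $\ci_i$ is a one-sided interval abutting a boundary of $\I$: if $\ell_i=\ell$ then there is no $x\in\I$ strictly to the left of $\ci_i$, so hypothesis (a) is vacuous and is not needed, and indeed in that situation every $x\in S$ lies to the right of $\ci_i$ so only (b) is used; symmetrically if $r_i=r$. Thus the hypotheses are invoked precisely when they are non-vacuous, and the case split above is exhaustive for each $x\in S$ and each $i$. Summing the vanishing contributions over $i$ (a countable sum, legitimate by the disjointness of the $\ci_i$ together with the integrability implicit in $g$ and $\Wa$ being well-defined $\R$-valued functions) gives $g(x)-\Wa(x)=0$ for all $x\in S$.

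I do not expect a serious obstacle here; the lemma is essentially a bookkeeping consequence of the explicit product form \eqref{eq:Garepr} of the Green function together with the two orthogonality conditions (a), (b). The only thing to be watched is the interchange of summation and integration when $m=\infty$ and the edge cases where an interval $\ci_i$ touches $\ell$ or $r$ (so that one of $\phia, \psia$ need not appear in the Riesz representation, cf.\ \eqref{eq:alphaexcessive}); in those cases the corresponding orthogonality condition is simply not imposed and not needed, which is consistent with the statement. If one wanted to be maximally careful about the $m=\infty$ case one would note that $\sum_i |\int_{\ci_i}\Ga(x,\cdot)\sigma|$ is dominated by $\int_{\I}\Ga(x,y)|\sigma|(dy)$, finite because $g$ is a well-defined function.
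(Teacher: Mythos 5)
Your proposal is correct and follows essentially the same route as the paper's proof: decompose $g(x)=\Wa(x)+\sum_i\int_{\ci_i}\Ga(x,y)\sigma(dy)$, note that $x\in S$ lies entirely to one side of each interval $\ci_i$, and use the product form \eqref{eq:Garepr} of the Green function to factor each integral as $\wa^{-1}\psia(x)\int_{\ci_i}\phia(y)\sigma(dy)$ (or its mirror), which vanishes by hypothesis (a) (resp.\ (b)). Your extra remarks on boundary intervals and the $m=\infty$ summation are sensible refinements of the same argument rather than a different approach.
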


\begin{proof}
From the definitions of $g$ and $\Wa$ we get
\begin{align*}
g(x)&=\int_{\I}\Ga(x,y)\sigma(dy)\\
&=\Wa(x) + \sum_{i=1}^m \int_{\ci_i}\Ga(x,y)\sigma(dy). \notag
\end{align*}
To prove the result it is enough to verify that if $x\in S$, then 
\begin{equation*}
\int_{\ci_i}\Ga(x,y)\sigma(dy)=0,\quad \mbox{for all $i$.}
\end{equation*}
Consider $x\in S$, then for any $i=1\ldots n$, we have that $x\notin \ci_i$. Since $\ci_i$ is an interval either $x<y$ for all $y$ in $\ci_i$ or $x>y$ for all $y$ in $\ci_i$. Suppose the first case, from \eqref{eq:Garepr} we obtain
\begin{align*}
\int_{\ci_i}\Ga(x,y)\sigma(dy)=\wa^{-1}\psia(x)\int_{\ci_i}\phia(y)\sigma(dy)=0,
\end{align*}
where the second equality follows from hypothesis. The other case is analogous.
\end{proof}


\subsection{Enlargment}
\begin{lemma}[Enlargment]
\label{lemma:enlargement}
 Under the assumptions of Theorem \ref{theorem:main}, consider an 
 interval $\ngi\subseteq \I$, such that $\sigma(dx)<0$ for $x\in \ngi$.
 Then, there exists an interval $\ci$ such that $(\ngi,\ci)$ satisfies Condition \ref{cond:continuation}
\end{lemma}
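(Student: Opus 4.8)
The plan is to construct the interval $\ci=(\bar a,\bar b)\supseteq\ngi=(a,b)$ by enlarging $\ngi$ to the left and to the right separately, adjusting each endpoint so that the balance conditions \eqref{ii}, \eqref{iii} of Condition \ref{cond:continuation} hold, and then checking that \eqref{i} and \eqref{iv} come for free. Condition \eqref{i} is immediate: since $\sigma(dx)<0$ on $\ngi$ and $\phia,\psia>0$, both $\int_\ngi\phia\,d\sigma$ and $\int_\ngi\psia\,d\sigma$ are negative. For the enlargement, recall that $\sigma_\ngi$ agrees with $\sigma$ on $\ngi$ and equals $\sigma^+\ge 0$ outside $\ngi$. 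First I would fix the right endpoint of $\ngi$ and study, as a function of the left endpoint $z\le a$, the quantity
\begin{equation*}
F(z):=\int_{(z,b)}\phia(y)\,\sigma_\ngi(dy)=\int_{(z,a]}\phia(y)\,\sigma^+(dy)+\int_{(a,b)}\phia(y)\,\sigma(dy).
\end{equation*}
At $z=a$ this equals $\int_\ngi\phia\,d\sigma<0$ by \eqref{i}; as $z$ decreases it increases (weakly, since $\sigma^+\ge 0$ and $\phia>0$), and under the absence of atoms in the speed measure it is continuous in $z$. Either it reaches $0$ at some $\bar a\in[\ell,a)$, and we take that as the left endpoint with \eqref{ii} holding with equality; or it stays negative all the way down, in which case $\bar a=\ell$ and \eqref{ii} is vacuous. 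Symmetrically, holding the left endpoint fixed, I would let $G(z):=\int_{(a,z)}\psia(y)\,\sigma_\ngi(dy)$ for $z\ge b$; it starts negative at $z=b$, is nondecreasing and continuous, and either hits $0$ at some $\bar b\in(b,r]$ (giving \eqref{iii} with equality) or stays negative, forcing $\bar b=r$ and \eqref{iii} vacuous.

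There is a subtlety in doing the two enlargements independently: once the left endpoint is moved to $\bar a$, the measure $\sigma_\ngi$ in condition \eqref{iii} is still the one associated with the \emph{original} $\ngi=(a,b)$, not with $(\bar a,b)$ — the definition \eqref{eq:arbitrary} of $\sigma_\arbitraryinterval$ uses the interval $\ngi$ appearing in Condition \ref{cond:continuation}, which is the given negative interval. So the two one-sided adjustments genuinely decouple, and I would carry them out in parallel rather than sequentially. The remaining and main point is \eqref{iv}: for $x\in\ci$ one must show $\int_\ci\Ga(x,y)\,\sigma_\ngi(dy)\le 0$. Writing $\Ga(x,y)=\wa^{-1}\psia(x\wedge y)\phia(x\vee y)$ and splitting $\ci=(\bar a,x]\cup(x,\bar b)$,
\begin{equation*}
\wa\int_\ci\Ga(x,y)\sigma_\ngi(dy)=\phia(x)\int_{(\bar a,x]}\psia(y)\sigma_\ngi(dy)+\psia(x)\int_{(x,\bar b)}\phia(y)\sigma_\ngi(dy).
\end{equation*}
I expect the argument to run as follows. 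Define $H(z)=\int_{(\bar a,z]}\psia\,\sigma_\ngi$. By construction $H(\bar b^-)=0$ when $\bar b<r$ (condition \eqref{iii}), and $H(\bar a)=0$. On the complement $(\bar a,a]\cup[b,\bar b)$ of $\ngi$ the integrand $\psia\,\sigma_\ngi=\psia\,\sigma^+\ge 0$, while on $\ngi$ it is negative; hence $H$ increases on $(\bar a,a]$, decreases on $(a,b)$, increases on $[b,\bar b)$, so its minimum over $[\bar a,\bar b]$ is $\min(H(\bar a),H(\bar b^-))=0$ — wait, that would give $H\ge 0$, so instead I track the companion function $\tilde H(z)=\int_{(z,\bar b)}\phia\,\sigma_\ngi$, which by \eqref{ii} vanishes at $z=\bar a$ and at $z=\bar b$, is nonincreasing off $\ngi$ and nondecreasing on $\ngi$, hence $\tilde H\le 0$ throughout. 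Combining $H\ge 0$ on $(\bar a,x]$... the sign bookkeeping here is exactly where care is needed; the clean way is to use that $\phia$ is decreasing and $\psia$ increasing to compare $\phia(x)\psia(y)$ with $\psia(x)\phia(y)$ for $y$ on each side of $x$, reducing \eqref{iv} to the one-sided integral inequalities already established. The main obstacle is therefore not the existence of the endpoints (a routine monotonicity-and-continuity argument, using the no-atom hypothesis for continuity) but this final verification of \eqref{iv}, i.e. showing that the two boundary balance conditions, together with the sign pattern of $\sigma_\ngi$ (negative on $\ngi$, nonnegative outside), force the full Green-kernel integral to be nonpositive on all of $\ci$.
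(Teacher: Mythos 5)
There is a genuine gap, and it is located exactly at the point you flagged as a ``subtlety'': your claim that the left and right enlargements decouple is wrong. In Condition \ref{cond:continuation}, items \eqref{ii} and \eqref{iii} are integrals of $\phia$ and $\psia$ against $\sigma_\ngi$ over the \emph{whole} enlarged interval $\ci=(\bar a,\bar b)$, not over one-sided pieces; only the measure $\sigma_\ngi$ is tied to the original $\ngi$. Consequently both endpoints enter both conditions. With your choice of $\bar a$ as the zero of $F(z)=\int_{(z,b)}\phia\,\sigma_\ngi(dy)$ and $\bar b$ as the zero of $G(z)=\int_{(a,z)}\psia\,\sigma_\ngi(dy)$, condition \eqref{ii} would read
\begin{equation*}
\int_{(\bar a,\bar b)}\phia(y)\,\sigma_\ngi(dy)=F(\bar a)+\int_{[b,\bar b)}\phia(y)\,\sigma^+(dy)\;=\;\int_{[b,\bar b)}\phia(y)\,\sigma^+(dy)\;\geq\;0,
\end{equation*}
and is generically strictly positive, so \eqref{ii} fails. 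The paper resolves precisely this coupling with an alternating scheme: set $x_1=\inf\{z\le a:\int_{(z,b)}\phia\,\sigma_\ngi<0\}$, then $y_1=\sup\{z\ge b:\int_{(x_1,z)}\psia\,\sigma_\ngi<0\}$, then $x_2$ using right endpoint $y_1$, then $y_2$ using $x_2$, and so on; the sequences are monotone and bounded, and the limit interval satisfies \eqref{ii} and \eqref{iii} simultaneously. Your one-shot construction would have to be replaced by such an iteration (or a fixed-point argument playing the same role), and this is not a cosmetic repair since the whole point of the lemma is that the single pair $(\bar a,\bar b)$ balances both weighted integrals at once.

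The second gap is that the verification of \eqref{iv} is not actually carried out: your bookkeeping with $H$ and $\tilde H$ visibly breaks down mid-argument and you fall back on the hope that monotonicity of $\phia,\psia$ ``reduces'' \eqref{iv} to the one-sided identities. The paper's proof of \eqref{iv} needs a further idea: for fixed $x\in\ci$ it builds the function $h(x,y)=k_1(x)\psia(y)+k_2(x)\phia(y)$ that matches $\Ga(x,\cdot)$ at the two points $y=a$ and $y=b$, checks $k_1(x),k_2(x)\ge 0$, and uses that $h(x,\cdot)\le \Ga(x,\cdot)$ on $\ngi$ (where $\sigma_\ngi\le 0$) while $h(x,\cdot)\ge \Ga(x,\cdot)$ off $\ngi$ (where $\sigma_\ngi\ge 0$); this yields $\int_{\ci}\Ga(x,y)\,\sigma_\ngi(dy)\le\int_{\ci}h(x,y)\,\sigma_\ngi(dy)\le 0$, the last step because $h$ is a nonnegative combination of $\psia$ and $\phia$ and the corresponding integrals over $\ci$ are nonpositive by construction. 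Your item \eqref{i} observation is fine, and the monotonicity-and-continuity reasoning for moving a single endpoint is sound as far as it goes, but without the coupled construction and the two-point interpolation argument the proof is incomplete.
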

\begin{proof}
 Consider $\ngi$ to be $(a,b)$. Assertion (\ref{i}) in Condition \ref{cond:continuation} is clearly fulfilled. Without loss of generality (denoting by $\phia$ the result of multiplying $\phia$ by the necessary positive constant) we may assume 
 \begin{equation*} \int_\ngi \psia(x)\sigma(dx)=\int_\ngi \phia(x)\sigma(dx)<0. \end{equation*}
 Under this assumption, $\phia(a)<\psia(a)$ and $\phia(b)>\psia(b)$. Consider 
 \begin{equation*}
 x_1:=\inf\left\{z\in [\ell,a]\colon \int_{(z,b)}\phia(x)\sigma_\ngi(dx)<0\right\}.
 \end{equation*}
 Since $\phia(x)>\psia(x)$ for $x\leq a$ and $\sigma_\ngi(dx)$ is non-negative in the same region we conclude that $\int_{(x_1,b)}\psia(x)\sigma_\ngi(dx)\leq 0$. Consider $y_1>b$ defined by
 \begin{equation*}
 y_1:=\sup\left\{z\in [b,r]\colon \int_{(x_1,z)}\psia(x)\sigma_\ngi(dx)<0\right\}. 
 \end{equation*}
 Now we consider $x_2\geq x_1$ as 
  \begin{equation*}
 x_2:=\inf\left\{z\in [\ell,a]\colon \int_{(z,y_1)}\phia(x)\sigma_\ngi(dx)<0\right\}
  \end{equation*}
 and $y_2\geq y_1$ as
  \begin{equation*}
 y_2:=\sup\left\{z\in [b,r]\colon \int_{(x_2,z)}\psia(x)\sigma_\ngi(dx)<0\right\}. 
  \end{equation*}
 Following in the same way we obtain two non-decreasing sequences $\ell\leq \{x_n\}\leq a$ and $b\leq \{y_n\}\leq r$. By construction, the interval $\ci=(\lim x_n, \lim y_n)$ satisfies (\ref{ii}) and (\ref{iii}) in Condition \ref{cond:continuation}. To prove \eqref{iv}, first we find $k_1(x)$ and $k_2(x)$ such that
 $$
 \begin{cases}
 k_1(x) \psia(a)+k_2(x) \phia(a)=\Ga(x,a) \\
  k_1(x) \psia(b)+k_2(x) \phia(b)=\Ga(x,b). 
 \end{cases}
 $$
 Solving the system we obtain 
 $$k_1(x)=\frac{\Ga(x,b)\phia(a)-\Ga(x,a)\phia(b)}{\psia(b)\phia(a)-\psia(a)\phia(b)}$$
 and
 $$k_2(x)=\frac{\Ga(x,a)\psia(b)-\Ga(x,b)\psia(a)}{\psia(b)\phia(a)-\psia(a)\phia(b)}.$$
Let us see that $k_1(x),k_2(x)\geq 0$ for any $x\in \ci$: using the explicit formula for $\Ga$ it follows that
\begin{equation*}
k_1(x) =
\begin{cases}
 0 &\mbox{for $x\leq a$,} \\
 \wa^{-1}\phia(b)\frac{\psia(x)\phia(a)-\psia(a)\phia(x)}{\psia(b)\phia(a)-\psia(a)\phia(b)} &\mbox{for $x\in (a,b)$,}\\
 \wa^{-1}\psia(x) &\mbox{for $x\geq b$}.
\end{cases}
\end{equation*}
When $x\in (a,b)$ the numerator and denominator are non-negative because $\phia$ is decreasing and $\psia$ increasing. 
The case of $k_2$ is analogous.
 
 Considering $h(x,y)=k_1(x)\psia(y)+k_2(x)\phia(y)$, it can be seen (discussing for the different positions of $x$ and $y$ with respect to $a$ and $b$) that for all $x\in \ci$, $h(x,y)\leq \Ga(x,y)$ for $y \in (a,b)$ and $h(x,y)\geq \Ga(x,y)$ for $y\notin (a,b)$. From these inequalities we conclude that
 \begin{equation*}
 \int_{\ci}\Ga(x,y)\sigma_\ngi(dy) \leq \int_{\ci}h(x,y)\sigma_\ngi(dy) \leq 0;
 \end{equation*}
 where the first inequality is consequence of $\sigma_\ngi(dy)\geq 0$ in $\I\setminus \ngi$ and $\sigma_\ngi(dy)\leq 0$ in $\ngi$; and the second one is obtained fixing $x$ and observing that $h(x,y)$ is a linear combination of $\psia$ and $\phia$ with non-negative coefficients.
\end{proof}
\begin{lemma}[Left Enlargment] \label{lem:caso1}
Under the assumptions of Theorem \ref{theorem:main}, consider the interval $\ngi=(a,b)$ and $\ci=(\ell,\bar{b})$ (with $\bar{b}<r$) such that $(\ngi,\ci)$ satisfy Condition \ref{cond:continuation}. 
Then, there exists $b'\geq \bar{b}$ such that $(\ngi'=(\ell,b),\ci'=(\ell,b'))$ satisfy Condition \ref{cond:continuation}.
\end{lemma}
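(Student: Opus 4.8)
The plan is to reduce this to an application of the basic Enlargement Lemma (Lemma \ref{lemma:enlargement}), applied to the interval $\ngi'=(\ell,b)$, but with some care because $\ngi'$ is no longer an interval on which $\sigma$ is everywhere negative — it contains the ``good'' part $(\bar{a},b)\setminus\ngi$ that was already added in the previous step, where $\sigma_{\ngi}$ is non-negative. First I would record what Condition \ref{cond:continuation} for the pair $(\ngi,\ci)$ already gives us: since $\inf\ci=\ell$, part \eqref{ii} is vacuous, so we only know $\int_\ngi\phia\,\sigma\le 0$, $\int_\ngi\psia\,\sigma\le 0$, that $\int_{\ci}\psia(x)\sigma_\ngi(dx)=0$ (from \eqref{iii}, using $\bar b<r$), and that $\int_{\ci}\Ga(x,y)\sigma_\ngi(dy)\le 0$ for all $x\in\ci$. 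The key structural observation is that $\sigma_{\ngi'}$ agrees with $\sigma_{\ngi}$ on $(\ell,\bar b)$: indeed on $\ngi'\setminus\ngi$ both measures equal $\sigma^+=\sigma$ (that part of $\ngi'$ lies in $\positive$), on $\ngi$ both equal $\sigma$, and on $\I\setminus\ci$ both equal $\sigma^+$. Hence conditions \eqref{iii} and \eqref{iv} for $(\ngi',\ci)$ are \emph{inherited verbatim} from those for $(\ngi,\ci)$, and condition \eqref{i} for $\ngi'$ asks only $\int_{\ngi'}\phia\,\sigma\le 0$ and $\int_{\ngi'}\psia\,\sigma\le 0$, which we check below.

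Next I would run the same iterative construction as in the proof of Lemma \ref{lemma:enlargement}, but only pushing the \emph{right} endpoint: set $\bar b_0=\bar b$ and define inductively
\[
\bar b_{k+1}=\sup\Bigl\{z\in[\bar b_k,r]\colon \int_{(\ell,z)}\psia(x)\,\sigma_{\ngi'}(dx)<0\Bigr\},
\]
which is non-decreasing because enlarging the interval only adds non-negative mass (we are always outside $\ngi'$ on the right). Put $b'=\lim_k \bar b_k\ge\bar b$ and $\ci'=(\ell,b')$. By construction, if $b'<r$ then $\int_{\ci'}\psia(x)\sigma_{\ngi'}(dx)=0$, giving \eqref{iii}; condition \eqref{ii} is vacuous since $\inf\ci'=\ell$. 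For \eqref{i} on $\ngi'=(\ell,b)$: the first step of the construction of $\ci$ in Lemma \ref{lemma:enlargement} forced $\int_{(\ell,b)}\psia(x)\sigma_{\ngi}(dx)\le 0$ once the left endpoint hit $\ell$, and since $\sigma_{\ngi}=\sigma$ on $(\ell,b)=\ngi'$ we get $\int_{\ngi'}\psia\,\sigma\le 0$; the bound $\int_{\ngi'}\phia\,\sigma\le 0$ is obtained the same way (or, after the normalization $\int_\ngi\phia\,\sigma=\int_\ngi\psia\,\sigma$, from the fact that $\phia\le\psia$ fails on $\ngi'\setminus\ngi$ — here one instead uses directly that $x_1=\ell$ in the construction of $\ci$ means $\int_{(\ell,b)}\phia(x)\sigma_{\ngi}(dx)\le 0$).

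Finally, for the harmonicity-type inequality \eqref{iv} I would repeat the argument at the end of Lemma \ref{lemma:enlargement}: since $\ngi'$ is again an interval, for $x\in\ci'$ either $x\le$ all of $\ngi'$ or $x$ lies between $\ell$ and $b$ or $x\ge b$; introduce the same auxiliary harmonic minorant/majorant $h(x,y)=k_1(x)\psia(y)+k_2(x)\phia(y)$ matching $\Ga(x,\cdot)$ at the endpoints $\ell$ (where the $\phia$-coefficient decouples) and $b$, check $k_1(x),k_2(x)\ge 0$ exactly as before, and conclude
\[
\int_{\ci'}\Ga(x,y)\,\sigma_{\ngi'}(dy)\ \le\ \int_{\ci'}h(x,y)\,\sigma_{\ngi'}(dy)\ \le\ 0,
\]
the first inequality because $\sigma_{\ngi'}\le 0$ on $\ngi'$ and $\ge 0$ off it, the second because $h(x,\cdot)$ is a non-negative combination of $\phia,\psia$ and $\int_{\ci'}\psia\,\sigma_{\ngi'}=0$ while $\int_{\ci'}\phia\,\sigma_{\ngi'}\le 0$ (the latter from \eqref{ii} being vacuous together with monotonicity of the truncated integral, as in Lemma \ref{lemma:enlargement}). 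I expect the main obstacle to be the bookkeeping in the last step: when the left endpoint is pinned at $\ell$, the function $\phia$ need not appear with a boundary condition, so one must verify that the relevant combination $\int_{\ci'}\phia(y)\sigma_{\ngi'}(dy)$ still has the right sign — this is where the inherited inequality $x_1=\ell$ from the original construction, rather than an equality, is used, and it is the one place where the ``left-anchored'' case genuinely differs from the two-sided Lemma \ref{lemma:enlargement}.
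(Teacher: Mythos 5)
Your proposal fails at its ``key structural observation'': you claim that on $\ngi'\setminus\ngi=(\ell,a]$ both $\sigma_{\ngi'}$ and $\sigma_{\ngi}$ equal $\sigma^+=\sigma$ because ``that part of $\ngi'$ lies in $\positive$''. Nothing in the hypotheses of the lemma gives this, and in the situation where the lemma is actually invoked by Algorithm \ref{algor} it is false: the case $\inf\ci=\ell$ arises precisely when the enlarged interval $\ci$ has swept over \emph{other} components $\ngi_1,\dots,\ngi_{j-1}$ of the negative set lying in $(\ell,a)$, so $(\ell,a)\cap\negative\neq\emptyset$ and $\sigma$ carries genuinely negative mass there. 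By \eqref{eq:arbitrary}, on $(\ell,a]$ one has $\sigma_{\ngi}=\sigma^+$ but $\sigma_{\ngi'}=\sigma$, so $\sigma_{\ngi'}-\sigma_{\ngi}=\sigma\ind{\negative}$ on $(\ell,a]$ is a nonpositive (generally nonzero) measure. Consequently conditions \eqref{iii} and \eqref{iv} for the pair $(\ngi',\ci)$ are \emph{not} ``inherited verbatim'': one only gets $\int_{\ci}\psia\,\sigma_{\ngi'}(dy)\le 0$ rather than $=0$, and the whole difficulty of the lemma is to show that the extra negative mass injected on $(\ell,a]$ does not destroy \eqref{iv} and \eqref{i} after the right endpoint is pushed out to $b'$. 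The paper handles exactly this by the decomposition
\begin{equation*}
\int_{\ci'}\Ga(x,y)\sigma_{\ngi'}(dy)=\int_{\ci}\Ga(x,y)\sigma_{\ngi}(dy)+\int_{\ci}\Ga(x,y)(\sigma_{\ngi'}-\sigma_{\ngi})(dy)+\int_{\ci'\setminus\ci}\Ga(x,y)\sigma_{\ngi'}(dy),
\end{equation*}
bounding the last two terms by $k(x)$ times the corresponding $\psia$-integrals, where $k(x)\psia(\bar b)=\Ga(x,\bar b)$, using the sign of each measure on each region and the identity $\int_{\ci}\psia\,\sigma_{\ngi}(dy)=0$. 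Your proof skips this step because the false premise makes it appear unnecessary.

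Two further points inherit the same error. For \eqref{i} you argue via ``$\sigma_{\ngi}=\sigma$ on $(\ell,b)=\ngi'$'' and via the value $x_1=\ell$ in the construction of Lemma \ref{lemma:enlargement}; neither is available --- the first is false as above, and the second presupposes that $\ci$ was produced by that particular construction, which is not a hypothesis of Lemma \ref{lem:caso1} (only that $(\ngi,\ci)$ satisfies Condition \ref{cond:continuation}). The paper instead derives \eqref{i} from $\int_{\ngi'}\psia\,\sigma(dy)\le\int_{\ci'}\psia\,\sigma_{\ngi'}(dy)\le 0$ and an analogous chain for $\phia$, where the sign of $\int_{\ci'}\phia\,\sigma_{\ngi'}(dy)$ again requires the decomposition-plus-comparison argument (with $k$ chosen by $\phia(\bar b)=k\psia(\bar b)$), not the remark that \eqref{ii} is vacuous. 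Your definition of $b'$ as a supremum of points where the $\psia$-integral is nonpositive, and the idea of a comparison function matched at $\bar b$, are in the spirit of the paper's proof, but as written the argument does not establish \eqref{i}, \eqref{iii} or \eqref{iv} in the generality the lemma requires.
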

\begin{proof}
First observe that, if $\arbitraryinterval\subset\arbitraryinterval'$, based on definition \eqref{eq:arbitrary}, we have that
\begin{equation}\label{eq:negativemeasure}
\sigma_{\arbitraryinterval'}(dx)-\sigma_\arbitraryinterval(dx)\text{ is a negative measure.}
\end{equation}
By hypothesis we know
\begin{equation*}
\int_{\ci}\psia(y)\sigma_{\ngi}(dy)=0.
\end{equation*}
It follows from \eqref{eq:negativemeasure} that
\begin{equation*}
\int_{\ci}\psia(y)\sigma_{\ngi'}(dy)\leq 0.
\end{equation*}
Consider $b'=\sup\{x \in [\bar{b},r) \colon \int_{(\ell,x)}\psia(y)\sigma_{\ngi'}(dy)\leq 0\}$. 
Let us check that 
$\ngi'=(\ell,b),\ci'=(\ell,b')$ satisfies Condition \ref{cond:continuation}.
It is clear that
$$\int_{\ci'}\psia(y)\sigma_{\ngi'}(dy)\leq 0,$$
with equality if $b'<r$. This proves (\ref{iii}) in Condition \ref{cond:continuation}. 
Observe that (\ref{ii}) is automatic, as $\ell=\inf\ci'$. Now we prove \eqref{iv}. Consider
\begin{align} \label{eq:splitIntegral1}
\int_{\ci'}\Ga(x,y)\sigma_{\ngi'}(dy)&=\int_{\ci}\Ga(x,y)\sigma_{\ngi}(dy)+\int_{\ci}\Ga(x,y)(\sigma_{\ngi'}-\sigma_{\ngi})(dy) \notag \\
&\qquad +\int_{\ci'\setminus \ci}\Ga(x,y)\sigma_{\ngi'}(dy).
\end{align}
The first term on the right-hand side is non-positive by hypothesis. 
Let us analyze the sum of the remainder terms. 
Considering the previous decomposition with $\psia(y)$ instead of $\Ga(x,y)$, and taking $\int_{\ci}\psia(y)\sigma_{\ngi}(dy)= 0$ into account, we obtain
\begin{equation}
\label{eq:intdiferpsi}
\int_{\ci}\psia(y)(\sigma_{\ngi'}-\sigma_{\ngi})(dy)+\int_{\ci'\setminus \ci}\psia(y)\sigma_{\ngi'}(dy)\leq 0.
\end{equation}
Consider $k(x)$ such that $k(x)\psia(\bar{b})=\Ga(x,\bar{b})$; we have $k(x)\psia(y)\leq\Ga(x,y)$ if $y\leq \bar{b}$ and $k(x) \psia(y)\geq\Ga(x,y)$ if $y\geq \bar{b}$. 
Also note that $(\sigma_{\ngi'}-\sigma_{\ngi})(dy)$ is non-positive in $\ci$ and $\sigma_{\ngi'}$ is non-negative in $\ci'\setminus \ci$. We get
\begin{multline}\label{eq:ga}
\int_{\ci}\Ga(x,y)(\sigma_{\ngi'}-\sigma_{\ngi})(dy) +\int_{\ci'\setminus \ci}\Ga(x,y)\sigma_{\ngi'}(dy) \\ 
\qquad \leq 
k(x)\left(
\int_{\ci}\psia(y)(\sigma_{\ngi'}-\sigma_{\ngi})(dy)+\int_{\ci'\setminus \ci}\psia(y)\sigma_{\ngi'}(dy)
\right)
\leq 0.
\end{multline}
This completes the proof of \eqref{iv}. 
To prove (\ref{i}), first observe that
\begin{equation}\label{eq:previous}
\int_{\ngi'}\psia\sigma(dy)\leq \int_{\ngi'}\psia\sigma_{\ngi'}(dy)\leq \int_{\ci'}\psia\sigma_{\ngi'}(dy)=0.
\end{equation}
To complete the proof, applying the same arguments in \eqref{eq:previous} to the decreasing solution $\phia$, it is enough to see that
$$
\int_{\ci'}\phia(y)\sigma_{\ngi'}(dy)\leq 0,
$$
Now
\begin{multline*}
\int_{\ci'}\phia(x)\sigma_{\ngi'}(dy)\\=\int_{\ci}\phia(x)\sigma_{\ngi}(dy)+\int_{\ci}\phia(x)(\sigma_{\ngi'}-\sigma_{\ngi})(dy)
 +\int_{\ci'\setminus \ci}\phia(x)\sigma_{\ngi'}(dy)
\\ \leq \int_{\ci}\phia(x)(\sigma_{\ngi'}-\sigma_{\ngi})(dy)
 +\int_{\ci'\setminus \ci}\phia(x)\sigma_{\ngi'}(dy)
\\ \leq k\left(\int_{\ci}\psia(x)(\sigma_{\ngi'}-\sigma_{\ngi})(dy)
 +\int_{\ci'\setminus \ci}\psia(x)\sigma_{\ngi'}(dy)\right)\leq 0.
\end{multline*}
The last equality is \eqref{eq:intdiferpsi}.
The first inequality is a consequence of the hypotesis.
And in the second, $k>0$ is such that  $\phia(\bar{b})=k \psia(\bar{b})$ and the same arguments as in \eqref{eq:ga} 
apply. This concludes the proof.
\end{proof}
\begin{lemma}[Right enlargment]\label{lem:caso2}
Under the assumptions of Theorem \ref{theorem:main}, consider the interval $\ngi=(a,b)$ and $\ci=(\bar{a},r)$ (with $\bar{a}>\ell$), such that $(\ngi,\ci)$ satisfies Condition \ref{cond:continuation}. Then, there exists $a'\leq \bar{a}$ such that $(\ngi'=(a,r),\ci'=(a',r))$ satisfies Condition \ref{cond:continuation}.
\end{lemma}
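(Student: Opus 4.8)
The plan is to follow the proof of Lemma~\ref{lem:caso1} step by step, interchanging the roles of the increasing and decreasing solutions $\psia$ and $\phia$ and of the endpoints $\ell$ and $r$. Because $\ci=(\bar a,r)$ now reaches the right endpoint, the active requirement in Condition~\ref{cond:continuation} for the pair $(\ngi,\ci)$ is \eqref{ii}, i.e. $\int_{\ci}\phia(y)\sigma_{\ngi}(dy)=0$, while \eqref{iii} is vacuous; the same will hold for the pair I construct. First I note that $\ngi=(a,b)\subseteq\ci=(\bar a,r)$ forces $\bar a\le a$, so for any $a'\le\bar a$ one has $\ngi\subset\ngi'=(a,r)$ and $\ngi'\subseteq\ci'=(a',r)$; hence, by \eqref{eq:negativemeasure}, $\sigma_{\ngi'}-\sigma_{\ngi}$ is a negative measure, and in particular $\int_{\ci}\phia(y)\sigma_{\ngi'}(dy)\le\int_{\ci}\phia(y)\sigma_{\ngi}(dy)=0$. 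This lets me define
\[
a':=\inf\Bigl\{x\in(\ell,\bar a]\colon \int_{(x,r)}\phia(y)\sigma_{\ngi'}(dy)\le 0\Bigr\},
\]
and set $\ngi'=(a,r)$, $\ci'=(a',r)$. By construction $\int_{\ci'}\phia(y)\sigma_{\ngi'}(dy)\le 0$, with equality whenever $a'>\ell$, which is precisely \eqref{ii} for $(\ngi',\ci')$; and \eqref{iii} is automatic since $\sup\ci'=r$.

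Next I would verify \eqref{iv}. For $x\in\ci'$ split
\begin{equation*}
\int_{\ci'}\Ga(x,y)\sigma_{\ngi'}(dy)=\int_{\ci}\Ga(x,y)\sigma_{\ngi}(dy)+\int_{\ci}\Ga(x,y)(\sigma_{\ngi'}-\sigma_{\ngi})(dy)+\int_{\ci'\setminus\ci}\Ga(x,y)\sigma_{\ngi'}(dy).
\end{equation*}
The first summand is non-positive: if $x\in\ci$ this is \eqref{iv} for $(\ngi,\ci)$; if $x\in\ci'\setminus\ci\subseteq(\ell,\bar a]$ then $x\le\bar a=\inf\ci$, so by \eqref{eq:Garepr} $\Ga(x,y)=\wa^{-1}\psia(x)\phia(y)$ for $y\in\ci$ and the summand equals $\wa^{-1}\psia(x)\int_{\ci}\phia(y)\sigma_{\ngi}(dy)=0$ by \eqref{ii} for $(\ngi,\ci)$. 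For the other two summands, introduce $k(x)\ge 0$ with $k(x)\phia(\bar a)=\Ga(x,\bar a)$; since $y\mapsto\Ga(x,y)/\phia(y)$ is non-decreasing, $\Ga(x,y)\ge k(x)\phia(y)$ on $\ci=(\bar a,r)$ and $\Ga(x,y)\le k(x)\phia(y)$ on $\ci'\setminus\ci$. Because $\sigma_{\ngi'}-\sigma_{\ngi}$ is non-positive on $\ci$ and $\sigma_{\ngi'}=\sigma^+$ is non-negative on $\ci'\setminus\ci$, both summands are at most
\begin{equation*}
k(x)\Bigl(\int_{\ci}\phia(y)(\sigma_{\ngi'}-\sigma_{\ngi})(dy)+\int_{\ci'\setminus\ci}\phia(y)\sigma_{\ngi'}(dy)\Bigr)=k(x)\Bigl(\int_{\ci'}\phia(y)\sigma_{\ngi'}(dy)-\int_{\ci}\phia(y)\sigma_{\ngi}(dy)\Bigr)\le 0,
\end{equation*}
using $\int_{\ci}\phia\sigma_{\ngi}=0$ and the definition of $a'$. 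This proves \eqref{iv}.

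Finally, for \eqref{i}: for the decreasing solution, arguing as in \eqref{eq:previous},
\[
\int_{\ngi'}\phia(y)\sigma(dy)\le\int_{\ngi'}\phia(y)\sigma_{\ngi'}(dy)\le\int_{\ci'}\phia(y)\sigma_{\ngi'}(dy)\le 0,
\]
since $\ngi'\subseteq\ci'$, $\sigma_{\ngi'}=\sigma$ on $\ngi'$ and $\sigma_{\ngi'}\ge 0$ on $\ci'\setminus\ngi'$. For the increasing solution it suffices to show $\int_{\ci'}\psia(y)\sigma_{\ngi'}(dy)\le 0$; using the same three-term splitting, the term $\int_{\ci}\psia(y)\sigma_{\ngi}(dy)$ is non-positive, and the remaining two terms are controlled, via the comparison $k\,\phia(\bar a)=\psia(\bar a)$ (with $k>0$) and the monotonicity of $y\mapsto\psia(y)/\phia(y)$, by $k\bigl(\int_{\ci'}\phia\sigma_{\ngi'}-\int_{\ci}\phia\sigma_{\ngi}\bigr)\le 0$, exactly as in \eqref{eq:intdiferpsi} and \eqref{eq:ga}.

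I expect the only genuinely delicate point — as already in Lemma~\ref{lem:caso1} — to be the non-positivity of $\int_{\ci}\psia(y)\sigma_{\ngi}(dy)$ (and, symmetrically, of $\int_{\ci}\phia(y)\sigma_{\ngi}(dy)$ there): this does not follow from Condition~\ref{cond:continuation}\eqref{i} for $(\ngi,\ci)$, which only bounds the integral over $\ngi$, but has to be extracted from the harmonicity estimate \eqref{iv} for $(\ngi,\ci)$ by letting $x\uparrow r$ and using that $\sigma_{\ngi}$ coincides with the non-negative measure $\sigma^+$ near $r$. Everything else is bookkeeping of signs in the three-term decomposition together with the elementary monotonicity of the ratios $\Ga(x,\cdot)/\phia$, $\psia/\phia$ and $\phia/\psia$.
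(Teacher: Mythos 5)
Your proposal is correct and follows essentially the same route as the paper, which proves this lemma simply as the mirror image of Lemma~\ref{lem:caso1} (interchanging $\phia$ and $\psia$, $\ell$ and $r$): the single-threshold definition of $a'$, the three-term decomposition for \eqref{iv}, and the comparison via $k(x)\phia(\bar a)=\Ga(x,\bar a)$ are exactly the mirrored steps. Your closing observation is also well taken: the non-positivity of $\int_{\ci}\psia(y)\sigma_{\ngi}(dy)$ (resp.\ $\int_{\ci}\phia(y)\sigma_{\ngi}(dy)$ in Lemma~\ref{lem:caso1}) is invoked in the paper as ``a consequence of the hypothesis'' without detail, and your extraction of it from Condition~\ref{cond:continuation}\eqref{iv} by sending $x$ to the far endpoint, using that $\sigma_\ngi=\sigma^+\geq 0$ there, is a valid way to justify it.
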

\begin{proof}
Analogous to the proof of the previous lemma.
\end{proof}

\subsection{Merge}
\begin{lemma}[Merge] \label{lemma:merge}
Under the assumptions of Theorem \ref{theorem:main}, consider $\ngi_1=(a_1,b_1)$, $\ngi_2=(a_2,b_2)$ such that $b_1<a_2$ and $(\desc-\igen)g(x)\geq 0$ for $x$ in $(b_1,a_2)$. Let ${\ci_1}=(\bar{a}_1,{\bar{b}_1})$ and ${\ci_2}=({\bar{a}_2},{\bar{b}_2})$ be intervals such that ${\bar{a}_1}>\ell$, ${\bar{b}_1}<r$, ${\bar{a}_2}>\ell$, ${\bar{b}_2}<r$. Suppose that the two pairs of intervals $(\ngi_1,{\ci_1})$, $(\ngi_2,{\ci_2})$ satisfy Condition \ref{cond:continuation}. 
Then, if ${\ci_1}\cap {\ci_2}\neq \emptyset$, 
considering $\ngi=(a_1,b_2)$, there exists an interval $\ci$ such that $(\ngi,\ci)$ satisfies Condition \ref{cond:continuation}.
\end{lemma}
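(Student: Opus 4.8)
The plan is to exhibit a companion interval $\ci$ for the merged interval $\ngi:=(a_1,b_2)$ by verifying part (\ref{i}) of Condition \ref{cond:continuation} by a direct computation and then re-running the enlargement iteration of Lemma \ref{lemma:enlargement} to get (\ref{ii})--(\ref{iv}). First I would fix conventions: relabelling if necessary, assume $\ci_1$ is the leftmost pair, $\inf\ci_1\le\inf\ci_2$. Since $\ci_1\cap\ci_2\neq\emptyset$ the set $\ci_1\cup\ci_2$ is an interval, and because $\inf(\ci_1\cup\ci_2)\le a_1$ and $\sup(\ci_1\cup\ci_2)\ge b_2$ it contains $\ngi$. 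The single geometric fact I would use repeatedly is that the gap is covered,
\begin{equation*}
(b_1,a_2)\subseteq(\ci_1\setminus\ngi_1)\cup(\ci_2\setminus\ngi_2):
\end{equation*}
indeed a point $t\in(b_1,a_2)$ lies in $\ci_1\cup\ci_2$, and if $t\in\ci_1$ then $t>b_1$ forces $t\notin\ngi_1$, while if $t\in\ci_2$ then $t<a_2$ forces $t\notin\ngi_2$.

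Part (\ref{i}) for $\ngi$ I would get as follows. Split $\int_\ngi\psia\,\sigma(dy)=\sum_{i=1}^2\int_{\ngi_i}\psia\,\sigma(dy)+\int_{(b_1,a_2)}\psia\,\sigma(dy)$. On the gap $(\desc-\igen)g\ge0$, so there $\sigma=\sigma^+\ge0$. The hypotheses $\bar a_i>\ell$ and $\bar b_i<r$ make parts (\ref{ii}) and (\ref{iii}) of Condition \ref{cond:continuation} for $(\ngi_i,\ci_i)$ effective, giving $\int_{\ci_i}\psia\,\sigma_{\ngi_i}(dy)=0$; since $\sigma_{\ngi_i}=\sigma^+$ on $\ci_i\setminus\ngi_i$ this yields $\int_{\ngi_i}\psia\,\sigma(dy)=-\int_{\ci_i\setminus\ngi_i}\psia\,\sigma^+(dy)$. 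Substituting, and using the covering fact together with $\psia\,\sigma^+\ge0$,
\begin{multline*}
\int_\ngi\psia\,\sigma(dy)=\int_{(b_1,a_2)}\psia\,\sigma^+(dy)-\sum_{i=1}^2\int_{\ci_i\setminus\ngi_i}\psia\,\sigma^+(dy)\\
\le\int_{(b_1,a_2)}\psia\,\sigma^+(dy)-\int_{(\ci_1\setminus\ngi_1)\cup(\ci_2\setminus\ngi_2)}\psia\,\sigma^+(dy)\le0,
\end{multline*}
and the identical computation with $\phia$ in place of $\psia$ gives $\int_\ngi\phia\,\sigma(dy)\le0$.

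For (\ref{ii})--(\ref{iv}) I would repeat the iteration in the proof of Lemma \ref{lemma:enlargement} starting from $\ngi$: alternately let $x_n$ be the infimum of the $z$ for which $\int_{(z,y_{n-1})}\phia\,\sigma_\ngi\le0$ and $y_n$ the supremum of the $z$ for which $\int_{(x_n,z)}\psia\,\sigma_\ngi\le0$, with $x_0,y_0$ the endpoints of $\ngi$; part (\ref{i}) just proved guarantees the iteration starts, and it produces monotone sequences with limits $x_\infty\le a_1$, $y_\infty\ge b_2$. Put $\ci=(x_\infty,y_\infty)$; then (\ref{ii}) and (\ref{iii}) hold by construction. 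The crux, and where the present situation departs from Lemma \ref{lemma:enlargement}, is (\ref{iv}): here $\sigma$ is not one-signed on $\ngi$ — it is negative on $\ngi_1\cup\ngi_2$ but non-negative on the gap — so one cannot dominate $\Ga(x,\cdot)$ over all of $\ci$ by a single combination $k_1(x)\psia+k_2(x)\phia$ as there, because such a combination can cross $\Ga(x,\cdot)$ at most twice, whereas the required sign pattern of $\Ga-h$ along $\ci$ (non-negative on $(x_\infty,a_1]$, non-positive on $\ngi_1$, non-negative on the gap, non-positive on $\ngi_2$, non-negative on $[b_2,y_\infty)$) has four alternations.

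My way around this is to fix a point $c_0$ in the gap and split $\ci=(x_\infty,c_0]\cup(c_0,y_\infty)$, so that the left piece contains $\ngi_1$ but not $\ngi_2$ and the right piece contains $\ngi_2$ but not $\ngi_1$, and on each piece $\sigma_\ngi$ is non-negative off the single negative component it contains — precisely the configuration handled in Lemma \ref{lemma:enlargement}. On the left piece one then dominates $\Ga(x,\cdot)$ by the combination matching the kernel at $a_1$ and $b_1$, symmetrically on the right, reducing (\ref{iv}) to sub-interval estimates of the type $\int_{(x_\infty,c_0]}\phia\,\sigma_\ngi\le0$ and $\int_{(x_\infty,c_0]}\psia\,\sigma_\ngi\le0$; these in turn I would obtain by combining the defining relations of $x_\infty,y_\infty$ with the already-available properties of $(\ngi_1,\ci_1)$ and $(\ngi_2,\ci_2)$ — notably their own instances of (\ref{iv}) — the monotonicity \eqref{eq:negativemeasure} (which gives $\sigma_\ngi\le\sigma_{\ngi_1}$ and $\sigma_\ngi\le\sigma_{\ngi_2}$ as measures), and the pointwise identity $\sigma_\ngi=\sigma_{\ngi_1}+\sigma_{\ngi_2}-\sigma^+$, both checked region by region. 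I expect the genuinely delicate point to be exactly this sign bookkeeping in (\ref{iv}): the relative positions of $b_1$, $\bar a_2$, $\bar b_1$, $a_2$ — for instance whether $\ci_1$ already protrudes into $\ngi_2$ — split the verification into several cases, and in each the choice of $c_0$ and of the matching points must be adjusted, so the proof must grind through a case analysis rather than produce one clean estimate. Since $(\ngi,\ci)$ replaces the two pairs $(\ngi_1,\ci_1),(\ngi_2,\ci_2)$ by one, every invocation of the merge step strictly lowers the number of pairs, so the enclosing algorithm terminates.
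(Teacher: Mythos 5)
Your derivation of part (\ref{i}) of Condition \ref{cond:continuation} for the merged interval $\ngi=(a_1,b_2)$ is correct and is a pleasant shortcut, but the two steps you build on it both have genuine gaps. First, you restart the iteration of Lemma \ref{lemma:enlargement} from $\ngi$ itself and claim that ``(\ref{i}) guarantees the iteration starts''. It does not: each step of that iteration transfers a $\phia$-inequality into a $\psia$-inequality (and back) using that, after normalization, $\phia\geq\psia$ on the left of the interval and $\phia\leq\psia$ on its right; in Lemma \ref{lemma:enlargement} this comes from $\sigma<0$ on the whole starting interval, which forces the crossing of $\phia$ and $\psia$ to lie inside it. For the merged $\ngi$ the measure changes sign (negative on $\ngi_1\cup\ngi_2$, non-negative on the gap), so normalizing $\int_\ngi\phia\,\sigma=\int_\ngi\psia\,\sigma$ no longer pins the crossing inside $(a_1,b_2)$, and the transfer step can fail. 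The paper's proof avoids this by normalizing through the overlap, $\int_{\ci_1\cap\ci_2}\phia\,\sigma^+=\int_{\ci_1\cap\ci_2}\psia\,\sigma^+$, which places the crossing inside $\ci_1\cap\ci_2$, and by running the enlargement from $(a',b')=\ci_1\cup\ci_2$ rather than from $\ngi$; this is the ingredient your sketch is missing.

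Second, and more seriously, your plan for (\ref{iv}) is not just incomplete bookkeeping; as described it cannot work in general. Splitting $\ci=(x_\infty,y_\infty)$ at a gap point $c_0$ and dominating $\Ga(x,\cdot)$ on each piece by a non-negative combination of $\phia,\psia$ reduces (\ref{iv}) to requiring $\int\phia\,\sigma_\ngi\leq0$ and $\int\psia\,\sigma_\ngi\leq0$ on \emph{both} halves. But when $\ell<x_\infty$ and $y_\infty<r$, parts (\ref{ii})--(\ref{iii}) give $\int_{\ci}\phia\,\sigma_\ngi=\int_{\ci}\psia\,\sigma_\ngi=0$, so those four half-interval inequalities force both cumulative integrals to vanish at the same point $c_0$ --- two equations for one free parameter, which generically has no solution (and letting $c_0$ depend on $x$ does not remove the over-determination). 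The resolution in the paper is to never perform a second two-point matching: for $x\notin(a_1,b_2)$ one uses $\Ga(x,y)\leq\wa^{-1}\psia(x)\phia(y)$, with equality wherever $\sigma_\ngi$ can be negative, to reduce (\ref{iv}) to $\int_\ci\phia\,\sigma_\ngi\leq0$; for $x\in\ngi$ one writes
\begin{equation*}
\int_{\ci}\Ga(x,y)\sigma_{\ngi}(dy)=\int_{{\ci_1}}\Ga(x,y)\sigma_{\ngi_1}(dy)+\int_{{\ci_1}}\Ga(x,y)(\sigma_{\ngi}-\sigma_{\ngi_1})(dy)+\int_{\ci\setminus {\ci_1}}\Ga(x,y)\sigma_{\ngi}(dy),
\end{equation*}
keeps the first term, which is $\leq0$ by the inherited condition (\ref{iv}) for $(\ngi_1,\ci_1)$, and bounds the remainder using that $\sigma_\ngi-\sigma_{\ngi_1}$ is supported in $\ngi_2$, where $\Ga(x,y)=\wa^{-1}\psia(x)\phia(y)$ exactly, again landing on $\int_\ci\phia\,\sigma_\ngi\leq0$. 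Your per-piece bound discards precisely the slack coming from the inherited (\ref{iv}), which is what makes the paper's estimate close. So the proposal, while sensible in outline, is missing the two ideas that make the lemma true: start the enlargement from $\ci_1\cup\ci_2$ with the overlap normalization, and prove (\ref{iv}) by reusing the old pairs' condition (\ref{iv}) together with the one-sided kernel bound rather than by a fresh domination over the merged interval.
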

\begin{proof}
By hypothesis
\begin{equation*}
\int_{{\ci_i}}\phia(x)\sigma_{\ngi_i}(dx)=\int_{{\ci_i}}\psia(x)\sigma_{\ngi_i}(dx)=0,\quad i=1,2.
\end{equation*}
Then
\begin{equation*}
\int_{{\ci_1}\cup {\ci_2}}\phia(x)\sigma(dx)=-\int_{{\ci_1}\cap {\ci_2}}\phia(x)\sigma^+(dx)
\end{equation*}
and
$$\int_{{\ci_1}\cup {\ci_2}}\psia(x)\sigma(dx)=-\int_{{\ci_1}\cap {\ci_2}}\psia(x)\sigma^+(dx).$$
We assume, without loss of generality, that 
$$\int_{{\ci_1}\cap {\ci_2}}\phia(x)\sigma^+(dx)=\int_{{\ci_1}\cap {\ci_2}}\psia(x)\sigma^+(dx)>0$$
and therefore, denoting by $(a',b')$ the interval ${\ci_1}\cup{\ci_2}$, we get:
\begin{equation*}
\int_{(a',b')}\phia(x)\sigma(dx)=\int_{(a',b')}\psia(x)\sigma(dx)<0;
\end{equation*}
$\psia(a')\leq\phia(a')$; and $\psia(b')\geq \phia(b')$. The same procedure in the proof of Lemma \ref{lemma:enlargement}, allow us to construct an interval $\ci$ such that $(\ngi,\ci)$ satisfy (\ref{i}), (\ref{ii}) and (\ref{iii}) in Condition \ref{cond:continuation}. Let us prove \eqref{iv}:
If $x<a_1$ we have $\Ga(x,y)=\wa^{-1}\psia(x)\phia(y)$ for $y\geq a_1$ and $\Ga(x,y)\leq \wa^{-1}\psia(x)\phia(y)$ for $y\leq a_1$; since $\sigma_{\ngi}(dy)$ is non-negative in $y\leq a_1$ we find
\begin{equation*}
\int_{\ci} \Ga(x,y)\sigma_{\ngi}(dy)\leq \wa^{-1}\psia(x) \int_{\ci} \phia(y)\sigma_{\ngi}(dy)\leq 0.
\end{equation*}
An analogous argument prove the assertion in the case $x>b_2$. Now consider $x\in \ngi$, suppose $x<\min\{a_2,{\bar{b}_1}\}$ (in case $x>\max\{b_1,{\bar{a}_2}\}$ an analogous argument is valid), we get 
\begin{align*}
\int_{\ci}\Ga(x,y)\sigma_{\ngi}(dy)&=\int_{{\ci_1}}\Ga(x,y)\sigma_{\ngi_1}(dy)+\int_{{\ci_1}}\Ga(x,y)(\sigma_{\ngi}-\sigma_{\ngi_1})(dy)\\
&\quad+\int_{\ci\setminus {\ci_1}}\Ga(x,y)\sigma_{\ngi}(dy),
\end{align*}
where $\int_{{\ci_1}}\Ga(x,y)\sigma_{\ngi_1}(dy)\leq 0$ by hypothesis. 
We move on to prove that the sum of the second and the third terms on the right-hand side of the previous equation are non-positive, thus completing the proof: Observe that 
$$\Ga(x,y)\leq \wa^{-1}\psia(x)\phia(y)$$
and 
$$\Ga(x,y)=\wa^{-1}\psia(x)\phia(y)\quad  (y\geq \min\{a_2,{\bar{b}_1}\})$$ 
The measure $(\sigma_{\ngi}-\sigma_{\ngi_1})$ has support in $\ngi_2$, where the previous equality holds. The measure $\sigma_{\ngi}(dy)$ is positive for $y<a_1$ where we do not have the equality, then
\begin{align*}
&\int_{{\ci_1}}\Ga(x,y)(\sigma_{\ngi}-\sigma_{\ngi_1})(dy) +\int_{\ci\setminus {\ci_1}}\Ga(x,y)\sigma_{\ngi}(dy)\\
&\leq \wa^{-1}\psia(x)\left(\int_{{\ci_1}}\phia(y)(\sigma_{\ngi}-\sigma_{\ngi_1})(dy) +\int_{\ci\setminus {\ci_1}}\phia(y)\sigma_{\ngi}(dy)\right)\leq 0,
\end{align*}
where the last inequality is a consequence of
\begin{align*}
\int_{\ci}\phia(y)\sigma_{\ngi}(dy)&=\int_{{\ci_1}}\phia(y)\sigma_{\ngi_1}(dy)+\int_{{\ci_1}}\phia(y)(\sigma_{\ngi}-\sigma_{\ngi_1})(dy)\\
&\quad+\int_{\ci\setminus {\ci_1}}\phia(y)\sigma_{\ngi}(dy)\leq 0,
\end{align*}
and 
$$\int_{{\ci_1}}\phia(y)\sigma_{\ngi_1}(dy)=0.$$
This completes the proof.
\end{proof}
\begin{lemma}[Left merge]\label{lem:casoizq}
Under the assumptions of Theorem \ref{theorem:main}, consider $\ngi_1=(\ell,b_1)$, $\ngi_2=(a_2,b_2)$ such that: $b_1<a_2$; and $(\desc-\igen)g(x)\geq 0$ for $x$ in $(b_1,a_2)$. Let ${\ci_1}=(\ell,{\bar{b}_1})$ and ${\ci_2}=({\bar{a}_2},{\bar{b}_2})$ be intervals such that: ${\bar{b}_1}<r$; ${\bar{a}_2}>\ell$; and ${\bar{b}_2}<r$. Suppose that the two pairs of intervals $(\ngi_1,{\ci_1})$, $(\ngi_2,{\ci_2})$ satisfy Condition \ref{cond:continuation}. 
If ${\ci_1}\cap {\ci_2}\neq \emptyset$ then, considering $\ngi=(\ell,b_2)$, there exists $\bar{b}$ such that $(\ngi,\ci=(\ell,\bar{b}))$ satisfies Condition \ref{cond:continuation}.
\end{lemma}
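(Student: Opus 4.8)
The plan is to run the merge construction of Lemma~\ref{lemma:merge} while keeping the left endpoint pinned at $\ell$, exactly as the left endpoint is handled in Lemma~\ref{lem:caso1}. Since every candidate interval $\ci=(\ell,\bar b)$ satisfies $\inf\ci=\ell$, assertion~\eqref{ii} of Condition~\ref{cond:continuation} is automatic, so it suffices to produce $\bar b$ for which the pair $(\ngi,\ci)$ with $\ngi=(\ell,b_2)$ satisfies \eqref{i}, \eqref{iii} and \eqref{iv}.

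First I would merge. Because $\ci_1=(\ell,{\bar{b}_1})$ and $\ci_1\cap\ci_2\ne\emptyset$, the union $\ci_1\cup\ci_2$ is again an interval of the form $(\ell,b^*)$ with $b^*=\max\{{\bar{b}_1},{\bar{b}_2}\}$ and $b_2\le b^*<r$. On $(b_1,a_2)$ one has $(\desc-\igen)g\ge 0$, so $\sigma=\sigma^+$ there and $\sigma_\ngi$ coincides with $\sigma_{\ngi_1}$ off $\ngi_2$ and with $\sigma_{\ngi_2}$ off $\ngi_1$; combining this with \eqref{iii} for $(\ngi_1,\ci_1)$ and $(\ngi_2,\ci_2)$ (both available since ${\bar{b}_1},{\bar{b}_2}<r$) and, as in Lemma~\ref{lemma:merge}, normalizing $\phia$ by a suitable positive constant, one obtains
\[
\int_{\ci_1\cup\ci_2}\psia(y)\,\sigma_\ngi(dy)\le 0
\quad\text{and}\quad
\int_{\ci_1\cup\ci_2}\phia(y)\,\sigma_\ngi(dy)\le 0 .
\]

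Then I would enlarge to the right only: set
\[
\bar b:=\sup\Bigl\{x\in[b^*,r)\colon \int_{(\ell,x)}\psia(y)\,\sigma_\ngi(dy)\le 0\Bigr\},\qquad \ci:=(\ell,\bar b).
\]
Since $\sigma_\ngi\ge 0$ on $[b_2,r)\supseteq[b^*,r)$ and the speed measure has no atoms, $x\mapsto\int_{(\ell,x)}\psia\,\sigma_\ngi$ is continuous and non-decreasing on $[b^*,r)$, hence $\int_{\ci}\psia\,\sigma_\ngi\le 0$ with equality whenever $\bar b<r$, which is \eqref{iii}. To obtain the companion bound $\int_{\ci}\phia\,\sigma_\ngi\le 0$ I would argue as in Lemma~\ref{lem:caso1}: decompose $\int_{\ci}\phia\,\sigma_\ngi$ over $\ci_1\cup\ci_2$ and over $\ci\setminus(\ci_1\cup\ci_2)$, discard the first (non-positive) term, and bound $\phia$ from above by a positive multiple of $\psia$ to the right of $b^*$ to reduce to the estimate for $\psia$ just proved. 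Assertion \eqref{i} then follows, since $\int_{\ngi}\psia\,\sigma=\int_{\ngi}\psia\,\sigma_\ngi\le\int_{\ci}\psia\,\sigma_\ngi\le 0$ and likewise $\int_{\ngi}\phia\,\sigma=\int_{\ngi}\phia\,\sigma_\ngi\le\int_{\ci}\phia\,\sigma_\ngi\le 0$, using that $\sigma_\ngi$ is non-negative and $\phia,\psia>0$ on $\ci\setminus\ngi$. For \eqref{iv} I would combine the arguments of Lemmas~\ref{lemma:merge} and \ref{lem:caso1}: split $\int_{\ci}\Ga(x,y)\,\sigma_\ngi(dy)$ into the contribution over $\ci_1\cup\ci_2$, handled through the hypotheses on $(\ngi_i,\ci_i)$ and the comparison above, and the contribution over $\ci\setminus(\ci_1\cup\ci_2)$, where $\sigma_\ngi\ge 0$; using $\Ga(x,\cdot)\le k(x)\psia(\cdot)$ to the right of the relevant level and $\Ga(x,\cdot)\ge k(x)\psia(\cdot)$ to its left (a single positive multiple of $\psia$ when $x$ lies outside $\ngi$, and a non-negative combination of $\psia$ and $\phia$ when $x$ lies inside $\ngi$, as in Lemma~\ref{lemma:enlargement}), and distinguishing the positions of $x$ relative to $b_1$, $a_2$ and $b_2$ as in Lemma~\ref{lemma:merge}, one reduces everything once more to $\int_{\ci}\psia\,\sigma_\ngi\le 0$ and $\int_{\ci}\phia\,\sigma_\ngi\le 0$.

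The step I expect to be the main obstacle is \eqref{iv}: one has to verify the pointwise comparisons between $\Ga(x,\cdot)$ and the appropriate linear combination of $\phia$ and $\psia$ for every relative position of $x$ among $b_1$, $a_2$, $b_2$ and $\bar b$, confirm that the coefficients remain non-negative so the inequalities survive integration against the signed measure $\sigma_\ngi$, and keep careful track of which of $\sigma$, $\sigma^+$, $\sigma_{\ngi_1}$, $\sigma_{\ngi_2}$, $\sigma_\ngi$ is in force on each subinterval. Each individual verification is routine; the difficulty is purely in the number of cases and the bookkeeping, and it is there that the left-boundary simplification (no case $x<\ell$, and \eqref{ii} vacuous) lightens the work relative to the general merge.
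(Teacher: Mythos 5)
Your overall route coincides with the paper's: pin the left endpoint at $\ell$ so that \eqref{ii} of Condition \ref{cond:continuation} is vacuous, define $\bar b$ as the supremum of the points $x$ with $\int_{(\ell,x)}\psia(y)\sigma_{\ngi}(dy)\le 0$, and verify \eqref{iii}, \eqref{iv} and \eqref{i} through the exact decomposition of the integral over $\ci$ into the contributions of $\ci_1$, $\ci_2$, minus the one of $\ci_1\cap\ci_2$ (with $\sigma_\ngi^+$), plus the one of $\ci\setminus(\ci_1\cup\ci_2)$, comparing the integrand with a multiple of $\psia$ pinned near $\bar b_2$. However, two of your $\phia$-steps do not work as written. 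First, the claim that $\int_{\ci_1\cup\ci_2}\phia(y)\sigma_\ngi(dy)\le 0$ follows from \eqref{iii} for the two pairs ``after normalizing $\phia$'' is unfounded: \eqref{iii} controls only $\psia$-integrals, and no positive rescaling of $\phia$ turns them into $\phia$-integrals (in Lemma \ref{lemma:merge} the normalization is legitimate because \eqref{ii} and \eqref{iii} are both non-vacuous for both pairs). What you actually have is $\int_{\ci_2}\phia(y)\sigma_{\ngi_2}(dy)=0$ from \eqref{ii} for $(\ngi_2,\ci_2)$ (since $\bar a_2>\ell$); the remaining ingredient $\int_{\ci_1}\phia(y)\sigma_{\ngi_1}(dy)\le 0$ is not among the assertions of Condition \ref{cond:continuation} for $(\ngi_1,\ci_1)$, because its \eqref{ii} is vacuous, so it has to be carried as an additional invariant of the pairs produced by the algorithm (the paper's own proof invokes it tacitly at the same spot).

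Second, your derivation of $\int_{\ci}\phia(y)\sigma_\ngi(dy)\le 0$ --- ``discard the first (non-positive) term and bound $\phia$ by $k\psia$ to the right of $b^*$, reducing to the $\psia$-estimate'' --- fails as stated: after discarding, what remains is bounded by $k\int_{\ci\setminus(\ci_1\cup\ci_2)}\psia(y)\sigma_\ngi(dy)$, which is non-negative because $\sigma_\ngi\ge 0$ on that set, and the estimate $\int_{\ci}\psia(y)\sigma_\ngi(dy)\le 0$ does not control this tail by itself. You must keep the negative middle term $-\int_{\ci_1\cap\ci_2}\psia(y)\sigma_\ngi^{+}(dy)$ coming from the four-term decomposition and play it off against the tail through the comparison pinned at $\bar b_2$ (where $\phia\ge k\psia$ to its left and $\phia\le k\psia$ to its right), which is precisely how the paper argues in \eqref{eq:splitintegralGa} and how you yourself treat \eqref{iv}; the same repair is needed where you deduce \eqref{i}. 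With these two corrections your sketch matches the paper's proof; the rest (choice of $\bar b$, automatic \eqref{ii}, the case analysis for \eqref{iv}) is sound.
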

\begin{proof}
Define $\bar{b}=\sup\{x \in [{\bar{b}_2},r) \colon \int_{(\ell,x)}\psia(y)\sigma_{\ngi}(dy)\leq 0\}$ (note that $\bar{b}_2$ belongs to the set). We have
\begin{equation}
\label{eq:intPsiaMenor0}
\int_{\ci}\psia(y)\sigma_{\ngi}(dy)\leq 0,
\end{equation}
with equality if $\bar{b}<r$, proving (\ref{ii}) in Condition \ref{cond:continuation}. 
To prove \eqref{iv} we split the integral as follows:
\begin{align}
\label{eq:splitintegralGa}
\int_{\ci}\Ga(x,y)\sigma_{\ngi}(dy)&=\int_{{\ci_1}}\Ga(x,y)\sigma_{\ngi_1}(dy)+\int_{{\ci_2}}\Ga(x,y)\sigma_{\ngi_2}(dy)\\
&\quad -\int_{{\ci_1}\cap {\ci_2}}\Ga(x,y)\sigma_{\ngi}^+(dy)+\int_{\ci\setminus({\ci_1}\cup {\ci_2})}\Ga(x,y)\sigma_{\ngi}(dy) \notag 
\end{align}
where $\sigma_{\ngi}^+$ is the positive part of $\sigma_{\ngi}$. Considering the same decomposition as in \eqref{eq:splitintegralGa} with $\psia(y)$, instead of $\Ga(x,y)$, and also considering: equation \eqref{eq:intPsiaMenor0}; $\int_{{\ci_1}}\psia(y)\sigma_{\ngi_1}(dy)=0$; and  $\int_{{\ci_2}}\psia(y)\sigma_{\ngi_2}(dy)=0$, we obtain
\begin{equation}
-\int_{{\ci_1}\cap {\ci_2}}\psia(y)\sigma_{\ngi}^+(dy)+\int_{\ci\setminus({\ci_1}\cup {\ci_2})}\psia(y)\sigma_{\ngi}(dy)\leq 0.
\end{equation}
For every $x$ consider $k(x)\geq 0$ such that $k(x)\psia({\bar{b}_2})=\Ga(x,{\bar{b}_2})$.
We have $k(x)\psia({\bar{b}_2})\leq \Ga(x,{\bar{b}_2})$ for $y\leq {\bar{b}_2}$ and $k(x)\psia({\bar{b}_2})\geq \Ga(x,{\bar{b}_2})$ for $y\geq {\bar{b}_2}$ and therefore
\begin{align*}
&-\int_{{\ci_1}\cap {\ci_2}}\Ga(x,y)\sigma_{\ngi}^+(dy)
+\int_{\ci\setminus({\ci_1}\cup {\ci_2})}\Ga(x,y)\sigma_{\ngi}(dy) \\ 
&\quad = k(x) \left(-\int_{{\ci_1}\cap {\ci_2}}\psia(y)\sigma_{\ngi}^+(dy)
+\int_{\ci\setminus({\ci_1}\cup {\ci_2})}\psia(y)\sigma_{\ngi}(dy)\right) \leq 0.
\end{align*}
The first two terms on the right-hand side of equation \eqref{eq:splitintegralGa} are also non-positive, and we conclude that \eqref{iv} in Condition \ref{cond:continuation} holds. To prove (\ref{ii}) we consider the decomposition in \eqref{eq:splitintegralGa} with $\phia(y)$ instead of $\Ga(x,y)$ and $k\geq 0$ such that $k\psia({\bar{b}_2})=\phia({\bar{b}_2})$; the same considerations done to prove \eqref{iv} conclude the result in this case.
\end{proof}
\begin{lemma}[Right merge]\label{lem:casoder}
Under the assumptions of Theorem \ref{theorem:main}, consider $\ngi_1=(a_1,b_1)$, $\ngi_2=(a_2,r)$ such that: $b_1<a_2$; and $(\desc-\igen)g(x)\geq 0$ for $x$ in $(b_1,a_2)$. Let ${\ci_1}=({\bar{a}_1},{\bar{b}_1})$ and ${\ci_2}=({\bar{a}_2},r)$ intervals such that: ${\bar{a}_1}>\ell$; ${\bar{b}_1}<r$; and ${\bar{a}_2}>\ell$. Suppose that the two pairs of intervals $(\ngi_1,{\ci_1})$, $(\ngi_2,{\ci_2})$ satisfy Condition \ref{cond:continuation}. If ${\ci_1}\cap {\ci_2}\neq \emptyset$ then, considering $\ngi=(a_1,r)$, there exists $\bar{a}$ such that $(\ngi,\ci=(\bar{a},r))$ satisfies Condition \ref{cond:continuation}.
\end{lemma}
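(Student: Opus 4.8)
\end{lemma}

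\begin{proof}
The plan is to repeat the proof of Lemma \ref{lem:casoizq} under the reflection exchanging the two endpoints of $\I$: here it is the pair $(\ngi_2,\ci_2)$ that reaches $r$, so I would run that argument with the roles of $\ell$ and $r$, of the increasing solution $\psia$ and the decreasing solution $\phia$, and of the two pairs $(\ngi_1,\ci_1)$ and $(\ngi_2,\ci_2)$ interchanged. Since the interval $\ci$ to be produced has the form $(\bar a,r)$, assertion \eqref{iii} of Condition \ref{cond:continuation} holds trivially and only \eqref{i}, \eqref{ii} and \eqref{iv} need to be checked. Concretely, I would put $\ngi=(a_1,r)$, let $\bar c:=\inf(\ci_1\cup\ci_2)$ (so $\ci_1\cup\ci_2=(\bar c,r)$, and $\ngi\subseteq\ci_1\cup\ci_2$ because $\bar c\le a_1$), and set
\begin{equation*}
\bar a:=\inf\Big\{x\in(\ell,\bar c]\colon\int_{(x,r)}\phia(y)\,\sigma_{\ngi}(dy)\le 0\Big\}.
\end{equation*}
I would then verify, exactly as for the point $\bar b_2$ in the proof of Lemma \ref{lem:casoizq}, that $\bar c$ already lies in this set, so that $\bar a\in[\ell,\bar c]$ is well defined; since the speed measure has no atoms, $x\mapsto\int_{(x,r)}\phia\,\sigma_{\ngi}$ is continuous, whence $\int_{\ci}\phia(y)\,\sigma_{\ngi}(dy)\le 0$, with equality whenever $\bar a>\ell$. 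This is exactly \eqref{ii} (and it gives $\bar a\le\bar c$, as required).

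For \eqref{iv} I would fix $x\in\ci$ and decompose $\int_{\ci}\Ga(x,y)\,\sigma_{\ngi}(dy)$ just as in \eqref{eq:splitintegralGa}: the contribution of $\ci_1$ against $\sigma_{\ngi_1}$, that of $\ci_2$ against $\sigma_{\ngi_2}$, minus the overlap $\ci_1\cap\ci_2$ against the positive part of $\sigma_{\ngi}$, plus that of $\ci\setminus(\ci_1\cup\ci_2)=(\bar a,\bar c]$ against $\sigma_{\ngi}$. The first two terms are $\le 0$ by assertion \eqref{iv} for the pairs $(\ngi_i,\ci_i)$, treating $x\in\ci_i$ and $x\notin\ci_i$ as in the proof of Lemma \ref{lem:waeqg}. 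For the remaining two I would use that $\ci_1\cap\ci_2$ lies to the right of $\bar c$ and $(\bar a,\bar c]$ to its left, choose $k(x)\ge 0$ with $k(x)\phia(\bar c)=\Ga(x,\bar c)$, and invoke that $y\mapsto\Ga(x,y)/\phia(y)$ is non-decreasing, so $\Ga(x,y)\le k(x)\phia(y)$ for $y\le\bar c$ and $\Ga(x,y)\ge k(x)\phia(y)$ for $y\ge\bar c$; with the signs of the measures involved this bounds the sum of those two terms by $k(x)$ times the same sum computed with $\phia$ instead of $\Ga(x,\cdot)$, which equals $\int_{\ci}\phia\,\sigma_{\ngi}\le 0$. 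That gives \eqref{iv}. The identical decomposition with $\psia$ replacing $\Ga(x,\cdot)$ and $k\ge 0$ with $k\phia(\bar c)=\psia(\bar c)$ gives $\int_{\ci}\psia\,\sigma_{\ngi}\le 0$, and then \eqref{i} follows as in the chain \eqref{eq:previous} of the proof of Lemma \ref{lem:caso1}, using $\ngi\subseteq\ci$ and $\sigma_{\ngi}\ge 0$ on $\ci\setminus\ngi$.

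The one step that calls for real care is the bookkeeping hidden in \eqref{eq:splitintegralGa}: tracking on which subintervals $\sigma_{\ngi}$ coincides with $\sigma_{\ngi_1}$, with $\sigma_{\ngi_2}$, or with its positive part---they agree on the gap $(b_1,a_2)$ precisely because $(\desc-\igen)g\ge 0$ there, and where they fail to agree the inequality still points the right way because $\Ga$, $\phia$ and $\psia$ are non-negative---and then keeping every sign straight. Once that decomposition is in place, the comparisons against $k(x)\phia$ are mechanical, since $\phia$ is decreasing, $\psia$ is increasing, and $\Ga(x,\cdot)/\phia$ is monotone.
\end{proof}
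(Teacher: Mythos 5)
Your proposal is correct and is essentially the paper's own argument: the paper proves this lemma simply by declaring it ``analogous to the previous lemma'' (the left merge, Lemma \ref{lem:casoizq}), and what you do is exactly that mirrored argument, swapping $\ell\leftrightarrow r$ and $\psia\leftrightarrow\phia$, defining $\bar a$ as the infimum where $\int_{(x,r)}\phia\,\sigma_\ngi\le 0$, and reusing the decomposition \eqref{eq:splitintegralGa} with the comparison function $k(x)\phia$ anchored at $\bar c=\inf(\ci_1\cup\ci_2)$. The bookkeeping you flag (the measures $\sigma_\ngi$, $\sigma_{\ngi_1}$, $\sigma_{\ngi_2}$ agreeing off the overlap because $(\desc-\igen)g\ge 0$ on the gap) is precisely what makes the paper's decomposition exact, so your write-up matches the intended proof in both structure and level of detail.
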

\begin{proof}
Analogous to the previous lemma.
\end{proof}

\begin{lemma}[Total merge] \label{lem:caso2lados}
Under the assumptions of Theorem \ref{theorem:main}, consider $\ngi_1=(\ell,b_1)$, $\ngi_2=(a_2,r)$ such that: $b_1<a_2$; and $(\desc-\igen)g(x)\geq 0$ for $x$ in $(b_1,a_2)$. Let ${\ci_1}=(\ell,{\bar{b}_1})$ and ${\ci_2}=({\bar{a}_2},r)$ intervals such that the two pairs of intervals $(\ngi_1,{\ci_1})$, $(\ngi_2,{\ci_2})$ satisfy Condition \ref{cond:continuation}. If ${\ci_1}\cap {\ci_2}\neq \emptyset$ then for all $x\in \I$,
\begin{equation}\label{eq:total}
\int_{\I}\Ga(x,y)\sigma(dy)\leq 0.
\end{equation}
\end{lemma}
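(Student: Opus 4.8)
The plan is to derive \eqref{eq:total} by reducing $\int_{\I}\Ga(x,y)\sigma(dy)$ to the two already verified instances of Condition \ref{cond:continuation}, exactly in the spirit of the proofs of Lemmas \ref{lem:casoizq} and \ref{lem:casoder}. First I would record the elementary geometric facts: since ${\ci_1}=(\ell,{\bar{b}_1})$, ${\ci_2}=({\bar{a}_2},r)$ and ${\ci_1}\cap {\ci_2}\neq\emptyset$, necessarily ${\bar{a}_2}<{\bar{b}_1}$, so ${\ci_1}\cup {\ci_2}=\I$ and ${\ci_1}\cap {\ci_2}=({\bar{a}_2},{\bar{b}_1})$. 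The hypothesis $(\desc-\igen)g(x)\geq 0$ on $(b_1,a_2)$ guarantees that $\negative$ meets $\I\setminus(\ngi_1\cup \ngi_2)$ in an $m$-null set only, so that $\sigma^+$ agrees with $\sigma$ off $\ngi_1\cup \ngi_2$ (recall $m$ has no atoms). Checking separately on ${\ci_1}\setminus {\ci_2}=(\ell,{\bar{a}_2}]$ (where $\sigma=\sigma_{\ngi_1}$), on ${\ci_2}\setminus {\ci_1}=[{\bar{b}_1},r)$ (where $\sigma=\sigma_{\ngi_2}$) and on ${\ci_1}\cap {\ci_2}$ (where $\sigma=\sigma_{\ngi_1}+\sigma_{\ngi_2}-\sigma^+$, as in the merge lemmas), one obtains the identity of signed measures on $\I$
\begin{equation*}
\sigma=\sigma_{\ngi_1}\,\ind{{\ci_1}}+\sigma_{\ngi_2}\,\ind{{\ci_2}}-\sigma^+\,\ind{{\ci_1}\cap {\ci_2}}.
\end{equation*}
Integrating $\Ga(x,\cdot)$ against it gives $\int_{\I}\Ga(x,y)\sigma(dy)=\mathrm{I}+\mathrm{II}-\mathrm{III}$, where $\mathrm{I}=\int_{{\ci_1}}\Ga(x,y)\sigma_{\ngi_1}(dy)$, $\mathrm{II}=\int_{{\ci_2}}\Ga(x,y)\sigma_{\ngi_2}(dy)$, and $\mathrm{III}=\int_{{\ci_1}\cap {\ci_2}}\Ga(x,y)\sigma^+(dy)\geq 0$ (since $\sigma^+\geq 0$ and $\Ga>0$).

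Then I would finish with a case analysis on where $x$ sits in $\I={\ci_1}\cup {\ci_2}$, after noting that one may assume $\ell<{\bar{a}_2}$ and ${\bar{b}_1}<r$: when Algorithm \ref{algor} reaches this lemma the earlier branches have already excluded ${\ci_i}=\I$, and if one of the ${\ci_i}$ equals $\I$ the bound \eqref{eq:total} follows directly from Condition \ref{cond:continuation} (parts \eqref{i} and \eqref{iv}) for the two pairs. If $x\in {\ci_1}\cap {\ci_2}$, then $\mathrm{I}\leq 0$ and $\mathrm{II}\leq 0$ by \eqref{iv} in Condition \ref{cond:continuation} applied to $(\ngi_1,{\ci_1})$ and $(\ngi_2,{\ci_2})$, so $\mathrm{I}+\mathrm{II}-\mathrm{III}\leq 0$. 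If $x\in {\ci_1}\setminus {\ci_2}=(\ell,{\bar{a}_2}]$, then $x\leq {\bar{a}_2}\leq y$ for every $y\in {\ci_2}$, hence $\Ga(x,y)=\wa^{-1}\psia(x)\phia(y)$ on ${\ci_2}$ by \eqref{eq:Garepr}; thus $\mathrm{II}=\wa^{-1}\psia(x)\int_{{\ci_2}}\phia(y)\sigma_{\ngi_2}(dy)=0$ by \eqref{ii} in Condition \ref{cond:continuation} for $(\ngi_2,{\ci_2})$ (its proviso holds since $\ell<\inf {\ci_2}={\bar{a}_2}$), while $\mathrm{III}\geq 0$ and $\mathrm{I}\leq 0$ by \eqref{iv}; so again the sum is $\leq 0$. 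The case $x\in {\ci_2}\setminus {\ci_1}=[{\bar{b}_1},r)$ is the mirror image: now $y<x$ for every $y\in {\ci_1}$, so $\Ga(x,y)=\wa^{-1}\psia(y)\phia(x)$ there, whence $\mathrm{I}=\wa^{-1}\phia(x)\int_{{\ci_1}}\psia(y)\sigma_{\ngi_1}(dy)=0$ by \eqref{iii} in Condition \ref{cond:continuation} for $(\ngi_1,{\ci_1})$, $\mathrm{III}\geq 0$ and $\mathrm{II}\leq 0$. This exhausts $\I$ and proves \eqref{eq:total}.

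I expect the only genuinely delicate point to be the bookkeeping behind the measure identity of the first paragraph: one must track, subinterval by subinterval, which of $\sigma$, $\sigma^+$, $\sigma_{\ngi_1}$, $\sigma_{\ngi_2}$ coincide, bearing in mind that $\ngi_1$ and $\ngi_2$ may themselves be intervals produced by earlier merges (so $(\desc-\igen)g$ need not be negative throughout them, and only the description of $\sigma^+=\sigma\,\ind{\positive}$ off $\negative$ is available), and that the boundary points $b_1,a_2,{\bar{a}_2},{\bar{b}_1}$ carry no $m$-mass because $m$ has no atoms. Once that identity is in place, the rest is routine manipulation with the two instances of Condition \ref{cond:continuation} and the explicit form \eqref{eq:Garepr} of $\Ga$, just as in the preceding merge lemmas.
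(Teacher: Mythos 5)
Your argument is essentially the paper's own proof: the paper uses exactly the decomposition $\int_{\I}\Ga(x,y)\sigma(dy)=\int_{\ci_1}\Ga(x,y)\sigma_{\ngi_1}(dy)+\int_{\ci_2}\Ga(x,y)\sigma_{\ngi_2}(dy)-\int_{\ci_1\cap\ci_2}\Ga(x,y)\sigma^+(dy)$ and simply observes that all three terms are non-positive. Your measure-identity bookkeeping and the case analysis on the position of $x$ (using \eqref{ii}, \eqref{iii}, \eqref{iv} of Condition \ref{cond:continuation} and the factorization \eqref{eq:Garepr}) correctly supply the details the paper leaves implicit, so the proposal is correct and follows the same route.
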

In consequence, the pair $(\I,\I)$ satisfies Condition \ref{cond:continuation}.  
\begin{proof}
Consider the following decomposition of the integral
\begin{align*}
\int_{\I}\Ga(x,y)\sigma(dy)&=\int_{{\ci_1}}\Ga(x,y)\sigma_{\ngi_1}(dy)+\int_{{\ci_2}}\Ga(x,y)\sigma_{\ngi_2}(dy)\\
&\qquad -\int_{{\ci_1}\cap {\ci_2}} \Ga(x,y)\sigma^+(dy).
\end{align*}
Observing that the three terms on the right-hand side are non-positive, the lemma is proved.
\end{proof}
\begin{remark}\label{remark:total}
Observe that this case can not happen under our hypothesis:
as the inversion formula \eqref{eq:inversionformula} holds, and we assume $g$ non negative,
condition \eqref{eq:total} gives a contradiction (unless $g\equiv 0$).
\end{remark}
\subsection{Proof of Theorem \ref{theorem:main}}

\begin{proof}
We first apply Algorithm \ref{algor} departing from $\negative=\cup_{i=1}^n\ngi_i$  to obtain a set of pairwise disjoint intervals 
$\{\ci_1,\ldots,\ci_m\}$. 
Observe that, under our hypothesis, the algorithm does not give as a result $C_1=\I$ (see Remark \ref{remark:total}). 
Denote $\continuation=\cup_{i=1}^m\ci_i$ and $\stopping=\I\setminus\continuation$.
Condition $\negative\subset\continuation$ follows by construction, as the algorithm enlarges the negative set. 
Furthermore, the intervals $\ci_i$ that result from the algorithm satisfy conditions \eqref{a} and \eqref{b} 
as they satisfy \eqref{i} and \eqref{ii} in Condition \ref{cond:continuation}, due to the fact that $\sigma_N=\sigma$ restricted to 
each final $\ci_i$ resulting from the algorithm.
Condition \eqref{c} is also satisfied, due to this same fact.
It remains to prove that this is in fact the continuation region associated with the optimal stopping problem. 
We use the Dynkin's characterization as the minimal $\desc$-excessive majorant to prove that
\begin{equation*}
\Va(x):=\int_{\I\setminus \continuation}\Ga(x,y)\sigma(dy)
\end{equation*}
is the value function. Since $\sigma(dy)$ is non-negative in $\I\setminus \continuation$ we have that $\Va$ is $\desc$-excessive. 
For $x\in \I$, we have
\begin{equation}
\label{eq:gVa}
g(x)=\int_{\I}\Ga(x,y)\sigma(dy)=\Va(x) + \sum_{i=1}^m \int_{\ci_i}\Ga(x,y)\sigma(dy). 
\end{equation}
If $x\notin\continuation$, the sum in the r.h.s. of \eqref{eq:gVa} vanishes by \eqref{a} and \eqref{b}, as in the proof of Lemma \ref{lem:waeqg}. This gives $V(x)=g(x)$ for $x\in\stopping$.
On the other hand, based on \eqref{c}, this same sum is non-positive if $x\in\continuation$.
This gives $V(x)\geq g(x)$ for $x\in\continuation$.
%
%
%
%
We have then proved that $\Va$ is a  majorant of $g$. We have, up to now, $\Va(x)\geq \sup_{\tau}\E_x\left(\ea{\tau}g(X_\tau)\right)$. Finally observe that, denoting by $\stopping$ the set $\I\setminus\continuation$
$$\Va(x)=\E_x\left(\ea{\hit{S}}\Va(X_{\hit{\stopping}})\right)=\E_x \left(\ea{\hit{\stopping}}g(X_{\hit{\stopping}})\right),$$
where the first equality is a consequence of Lemma \ref{lem:Wa}. We conclude that $\Va$ is the value function and that $\stopping$ is the stopping region, finishing the proof.
\end{proof}

\bibliographystyle{plainnat}

\begin{thebibliography}{17}
\providecommand{\natexlab}[1]{#1}
\providecommand{\url}[1]{\texttt{#1}}
\expandafter\ifx\csname urlstyle\endcsname\relax
  \providecommand{\doi}[1]{doi: #1}\else
  \providecommand{\doi}{doi: \begingroup \urlstyle{rm}\Url}\fi

\bibitem[Alvarez(2001)]{Alvarez:2001}
Alvarez, L. H. R. 
Reward functionals, salvage values, and optimal stopping
\emph{Math. Meth. Oper. Res.} 
(2001) 54, 315--337.

\bibitem[Borodin and Salminen(2002)]{BorodinSalminen:2002}
Borodin, A.~N.  and Salminen, P.
\newblock \emph{Handbook of {B}rownian motion---facts and formulae}.
\newblock Probability and its Applications. Birkh{\"a}user Verlag, Basel,
  second edition, 2002.

\bibitem[Christensen et al.(2019)]{ChristensenEtAl:2019}
Christensen, S., Crocce, F., Mordecki, E. and Salminen, P.
On optimal stopping of multidimensional diffusions. 
Stochastic Processes and their Applications 129 (2019) 2561--2581

\bibitem[Crocce(2014)]{Crocce:2014}
Crocce, F.
Optimal stopping for strong {M}arkov processes: 
Explicit solutions and verification theorems for diffusions, multidimensional diffusions, and jump-processes.
Phd Thesis. Universidad de la Rep\'ublica (arXiv:1405.7539),
(2014).


\bibitem[Crocce and Mordecki(2012)]{CrocceMordecki:2012}
Crocce, F. and Mordecki, E.
\newblock Explicit solutions in one-sided optimal stopping problems for
  one-dimensional diffusions.
\newblock \emph{Stochastics: An international journal of probability and stochastic processes}, 
(2013) 86(3), 
pp. 491--509.



\bibitem[Dayanik and Karatzas(2003)]{DayanikKaratzas:2003}
Dayanik, S. and Karatzas, I.,
On the optimal stopping problem for one-dimensional diffusions,
Stochastic Processes and their Applications,
 107(2),
2003,
 173--212.


\bibitem[Dudley(2002)]{Dudley:2002}
Dudley, R.M.
\newblock \emph{Real analysis and probability}, volume~74.
\newblock Cambridge University Press, 2002.

\bibitem[Dynkin(1963)]{Dynkin:1963}
Dynkin, E.~B. 
\newblock Optimal choice of the stopping moment of a {M}arkov process.
\newblock \emph{Dokl. Akad. Nauk SSSR}, 150:\penalty0 238--240, 1963.

\bibitem[Dynkin(1965)]{Dynkin:1965}
Dynkin, E.~B. 
\newblock \emph{Markov Processes I, II}.
\newblock Springer-Verlag, Berlin, Heidelberg, and New York, 1965.

\bibitem[Dynkin(1969)]{Dynkin:1969}
Dynkin, E.~B. 
\newblock The exit space of a {M}arkov process.
\newblock \emph{Uspehi Mat. Nauk}, 24\penalty0 (4 (148)):\penalty0 89--152,
  1969.

\bibitem[Feller(1957)]{Feller:1957}
Feller, W.
\newblock Generalized second order differential operators and their lateral
  conditions.
\newblock \emph{Illinois journal of mathematics}, 1\penalty0 (4):\penalty0
  459--504, 1957.

    

\bibitem[Gerber and Shiu(1994)]{GerberShiu}
Gerber, H.~U. and Shiu, E.~S.~W.
Martingale Approach to Pricing Perpetual American Options.
ASTIN Bulletin: The Journal of the IAA,
Volume 24, Issue 2, 1994 , pp. 195-220

\bibitem[It{\^o} and {McKean Jr.}(1974)]{ItoMcKean:1974}
It{\^o} K. and {McKean Jr.}, H.~P. 
\newblock \emph{Diffusion processes and their sample paths}.
\newblock Springer-Verlag, Berlin, 1974.
\newblock Second printing, corrected, Die Grundlehren der mathematischen
  Wissenschaften, Band 125.

\bibitem[Karatzas and Shreve(1991)]{KaratzasShreve:1991}
Karatzas, I. and Shreve, S.~E.
\newblock \emph{Brownian motion and stochastic calculus}, volume 113 of
  \emph{Graduate Texts in Mathematics}.
\newblock Springer-Verlag, New York, second edition, 1991.



\bibitem[Mc Kean(1965)]{mckean}
Mc Kean, Jr. H.P.:
Appendix: A free boundary problem for the heat equation 
arising from a problem in Mathematical Economics.
Industrial Management Review
{\bf 6} (spring)
32--39
(1965)

\bibitem[Lamberton and Zervos(2013)]{LambertonZervos:2013}
Lamberton D. and Zervos M.
On the optimal stopping of a one-dimensional diffusion.
\emph{Electron. J. Probab.} 18 (2013), no. 34, 1--49.

\bibitem[Lempa(2010)]{Lempa:2010}
Lempa, J.
A note on optimal stopping of diffusions with a two-sided optimal rule.
Operations Research Letters 38 (2010) 11--16.


\bibitem[Merton(1973)]{Merton:1973}
Merton, R.C.:
Theory of rational option pricing,
Bell J. Econom. Manag. Sci.
{\bf 4}, 
141--183
(1973)




\bibitem[Mordecki and Salminen(2007)]{MordeckiSalminen:2007}
Mordecki, E. and Salminen, P.
\newblock Optimal stopping of {H}unt and {L}\'evy processes.
\newblock \emph{Stochastics}, 79\penalty0 (3-4):\penalty0 233--251, 2007.

\bibitem[Mordecki and Salminen(2019a)]{MordeckiSalminen:2019a}
Mordecki, E. and Salminen, P.
Optimal stopping of Brownian motion with broken drift. 
High Frequency,
2(2),
Special Issue in memory of Larry Shepp, part 1
(2019)
pp. 113--120.

\bibitem[Mordecki and Salminen(2019b)]{MordeckiSalminen:2019b}
Mordecki, E. and Salminen, P.
Optimal stopping of oscillating Brownian motion.
Electronic Communications in Probability,
24(1), (2019), pp. 1--12.



\bibitem[Peskir and Shiryaev(2006)]{PeskirShiryaev:2006}
Peskir, G. and Shiryaev, A.~N.
 {Optimal stopping and free-boundary problems},
 {Lectures in Mathematics ETH Z\"urich},
 {Birkh\"auser Verlag},
 {Basel},
 {2006}.


\bibitem[{R Core Team}(2012)]{Rsoftware}
{R Core Team}.
\newblock \emph{R: A Language and Environment for Statistical Computing}.
\newblock R Foundation for Statistical Computing, Vienna, Austria, 2012.
\newblock URL \url{http://www.R-project.org}.
\newblock {ISBN} 3-900051-07-0.

\bibitem[Revuz and Yor(1999)]{RevuzYor:1999}
Revuz, D. and Yor, M.
\newblock \emph{Continuous martingales and {B}rownian motion}, volume 293 of
  \emph{Grundlehren der Mathematischen Wissenschaften}.
\newblock Springer-Verlag, Berlin, third edition, 1999.

\bibitem[R\"uschendorf and Urusov(2008)]{RuschendorfUrusov:2008}
R\"uschendorf, L. and Urusov, M. A.: 
On a class of optimal stopping problems for diffusions with discontinuous coefficients. 
\emph{Ann. Appl. Probab.} 18, (2008), 847--878. 

\bibitem[Salminen(1985)]{Salminen:1985}
Salminen, P.
\newblock Optimal stopping of one-dimensional diffusions.
\newblock \emph{Math. Nachr.}, 124:\penalty0 85--101, 1985.

\bibitem[Shiryaev(2008)]{Shiryaev:2008}
Shiryaev, A.~N. 
\newblock \emph{Optimal stopping rules}, 
\newblock Springer-Verlag, Berlin, 2008.
\newblock Translated from the 1976 Russian second edition by A. B. Aries,
Reprint of the 1978 translation.

\bibitem[Shiryaev(2010)]{Shiryaev:2010}
Shiryaev A.~N. (2010) Quickest Detection Problems: Fifty Years Later, Sequential Analysis: Design Methods and Applications, 29:4, 345-385.

\bibitem[Taylor(1968)]{Taylor:1968}
Taylor, H. M.,
Optimal stopping in a {M}arkov process,
Ann. Math. Statist.,
{39},
(1968)




\end{thebibliography}

\end{document}